\documentclass[journal,twoside,web]{ieeecolor}
\usepackage{generic}
\usepackage{cite}
\usepackage{amsmath,amssymb,amsfonts}
\usepackage{algorithmic}
\usepackage{graphicx}
\usepackage{textcomp}

\usepackage{amsthm}
\usepackage[linesnumbered,ruled,vlined]{algorithm2e}
\usepackage{xcolor}

\SetKwInput{KwInput}{Input}                
\SetKwInput{KwOutput}{Output}              

\newcommand{\until}[1]{\{1,\dots, #1\}}

\newcommand{\untilzn}[1]{\{0, 1,\dots, #1\}}

\newcommand{\untilzinf}{\{0, 1,\dotsc\}}
\newcommand{\subscr}[2]{#1_{\textup{#2}}}

\def\BibTeX{{\rm B\kern-.05em{\sc i\kern-.025em b}\kern-.08em
    T\kern-.1667em\lower.7ex\hbox{E}\kern-.125emX}}
\begin{document}
\title{Enhancing network resilience through topological switching}
\author{Fei Chen, Jorge Cort\'es, \IEEEmembership{Fellow, IEEE}, and Sonia Mart{\'\i}nez, \IEEEmembership{Fellow, IEEE}
\thanks{Fei Chen is with the Department of Aeronautics and Astronautics, Massachusetts Institute of Technology, Cambridge, MA 02139, USA (email: feic@mit.edu). This work was conducted when he was at the University of California, San Diego.}
\thanks{Jorge Cort\'es and Sonia Mart{\'\i}nez are with the Department of Mechanical
 and Aerospace Engineering, University of California, San
Diego, CA 92093, USA (e-mail: cortes@ucsd.edu; soniamd@ucsd.edu).}
}

\maketitle

\newtheorem{remark}{Remark}
\newtheorem{definition}{Definition}
\newtheorem{proposition}{Proposition}
\newtheorem{theorem}{Theorem}
\newtheorem{lemma}{Lemma}
\newtheorem{corollary}{Corollary}
\newtheorem{problem}{Problem}
\newtheorem{assumption}{Assumption}

\newcommand{\Real}{\operatorname{Re}}
\newcommand{\diag}[1]{\operatorname{diag}(#1)}
\newcommand{\spa}{\operatorname{span}}
\newcommand{\ve}{\operatorname{vec}}
\newcommand{\rank}{\operatorname{rank}}
\newcommand{\nullity}{\operatorname{nullity}}
\newcommand{\Tr}{\operatorname{Tr}}
\newcommand{\ROS}{\operatorname{ROS}}
\newcommand{\rROS}{\operatorname{rROS}}
\newcommand{\AOone}{\mathop{\mathrm{AO}_1}}
\newcommand{\AOtwo}{\mathop{\mathrm{AO}_2}}
\newcommand{\AOthr}{\mathop{\mathrm{AO}_3}}
\newcommand{\lzeroRec}{\operatorname{\ell_{0}-rec}}
\newcommand{\loneRec}{\operatorname{\ell_{1}-rec}}
\newcommand{\rloneRec}{\operatorname{r-\ell_{1}}}
\newcommand{\rrloneRec}{\operatorname{r^2-\ell_{1}}}
\newcommand{\cvxRec}{\operatorname{cvx-\ell_{1}}}
\newcommand{\amRec}{\operatorname{am-\ell_{1}}}

\newcommand{\optswitch}{\operatorname{OptSwitch}}

\newcommand{\oprocendsymbol}{\hbox{$\bullet$}} 
\newcommand{\oprocend}{\relax\ifmmode\else\unskip\hfill\fi\oprocendsymbol}
\def\eqoprocend{\tag*{$\bullet$}}

\begin{abstract}
This work studies how to preemptively increase the resilience of a network  by means of time-varying topological actuation. 
To do this, we focus on linear dynamical systems that are compatible with a given network, and consider policies that switch periodically between the given one and an alternative, topologically-compatible dynamics. In particular, we seek to solve design problems aimed at finding a) the optimal switching schedule between two preselected topologies, and b) an optimal topology and optimal switching schedule.  By imposing periodicity, we first provide a metric of resilience in terms of the spectral abscissa of the averaged linear time-invariant dynamics. By restricting our policies to commutative networks, we then show how the optimal scheduling problem reduces to a convex optimization, providing a bound on  the net resilience that can be achieved. After this, we find that the optimal, sparse commutative network to switch with is fully disconnected and allocates the spectral sum among the nodes of the network equally. We then impose additional restrictions on topology edge selection, which leads to a biconvex optimization for which certain matrix rank conditions guide the choice of weighting parameters to obtain desirable solutions.
Finally, we provide two methods to solve this problem efficiently (based on a McCormick relaxation, and alternating minimization), and illustrate the results in simulations. 
\end{abstract}

\begin{IEEEkeywords}
Network resilience, time-varying actuation, optimal switching, sparsity-promoting optimization.
\end{IEEEkeywords}

\section{Introduction}
\label{sec:introduction}
Networked systems find broad application in diverse domains such as robotics, social networks~\cite{gatti2013large}, security~\cite{gorodetski2002multi}, and intelligent transportation systems~\cite{burmeister1997application}.  
While significant progress has been made in developing their function,  
networked systems remain highly vulnerable to various forms of attacks and failures. 
As the dependence of infrastructure systems on networked systems grows, so does the harmful impact of attacks, with great social and economic cost.


Here, we study how to make networks more resilient to structural attacks, aimed at disrupting the topology that supports the network operation. In particular, we are interested in quantifying how switching between complementary meshes can reduce the impact of a unknown and fixed topological failure. As switching can lead to instability, this requires the careful design of the time-varying actuation strategy together with the selection of complementary meshes for control.

 The resilience of networks described by linear time-invariant dynamics is often analyzed through metrics derived from the spectral and pseudo-spectral properties of the associated matrix. Specifically, the pseudo-spectral abscissa~\cite{trefethen2020spectra} is a key metric for assessing the sensitivity of a dynamical system to perturbations. 
 Existing algorithms to compute it include the criss-cross algorithms~\cite{lu2017criss,burke2003robust}, which rely on the bisection method~\cite{byers1988bisection}, and~\cite{rostami2015new,guglielmi2011fast}, which provide fast iterations that exploit the sparse structure of matrices. 
The stability radius is a related notion that aims to capture the smallest perturbation required to destabilize a system. The work~\cite{guglielmi2016method} 
approximates the stability radius of sparse matrices leveraging~\cite{rostami2015new}. In~\cite{liu2022iterative}, the authors characterize the analogs of these notions for a class of constrained structured perturbations, proposing iterative algorithms that are capable of tackling networks with hundreds of nodes.

The literature on time-varying network systems is much more limited with most of the focus on achieving stability despite different types of attacks and under intermittent or time-varying communications. Some early work includes~\cite{HSF-SM:16-sicon,MZ-SM:12}, on system stabilization under Denial of Service and reply attacks,~\cite{SS-BG:18} on distributed optimization under adversarial nodes, or~\cite{pasqualetti2013attack} on attack detection and identification. More recently,~\cite{mitra2021distributed} introduces a single-time-scale distributed state estimation algorithm under time-varying graphs and worst-case adversarial attacks, while~\cite{khajenejad2025distributed} proposes a distributed interval estimation algorithm for arbitrary attack isolation. 
In contrast, the work~\cite{SL-SM-JC:23-tac} proposes switching as a defense mechanism against rational adversaries with partial knowledge on an operator's decisions. 


The first contribution of this work is an analysis of how switching between two given topologies can increase the resilience of a given network. By prescribing periodic changes between two linear dynamics, we ease the time-varying system analysis to an averaged linear time-invariant counterpart, which allows us to identify the spectral abscissa of the latter as the resilience metric of interest. Then, we formulate and solve design problems of increased difficulty to optimize this metric. First, we look for an optimal switching schedule between two commutative networks, and show how this problem reduces to a convex optimization. 
In particular, we bound net resilience using the largest eigenvalue of each system matrix together with the eigenvalue of the other matrix associated with the same eigenvector.
Second, we consider the joint design of both a complementary network and the associated optimal schedule. This is done by including a sparsity-promoting term in the problem cost that results into a hard optimization. Interestingly, we are able to show that the best commutative topology  is one that matches a completely disconnected network. After this, we further constrain the problem to limit the set edges that can be modified. We find an algebraic rank condition that establishes when a compatible commutative network exists and obtain biconvex problem formulations to solve the associated problem. Finally, we introduce two efficient algorithms (based on McCormick relaxation and alternating minimization algorithms) to construct the complementary network, and evaluate their effectiveness in simulation. 
\section{Preliminaries}
In this section, we introduce notations used throughout the paper, summarizing main concepts from Floquet theory~\cite{floquet1883equations}. 
\subsubsection{Notation}\label{sub:notation}
In what follows, $I_d$ denotes the $d\times d$ identity matrix. Given a matrix $A =[a_{ij}]\in \mathbb{R}^{n \times n}$, $a_{ij}$ 
denotes its $ij$-th entry, $\Tr(A)$ refers to its trace, and $\ve(A)\in \mathbb{R}^{n^2}$ is the vectorized form of $A$ obtained by stacking its columns. By $\rank(A)$ we denote the rank of $A$, and by $\nullity(A)$ the dimension of its null space.
For a vector $\mathrm{v}=[v_1, v_2,\dots,v_n]^T\in \mathbb{R}^n $, $\diag{\mathrm{v}}$ represents the diagonal matrix with diagonal equal to  $\mathrm{v}$. 
The span of $A_i \in \mathbb{R}^{n \times n}, i \in \until{k}$,  is defined as
$\spa \{A_1, A_2, \dots, A_k\} 
= \left\{ c_1 A_1 + c_2 A_2 + \cdots + c_k A_k \;\middle|\; c_1, c_2, \dots, c_k \in \mathbb{R} \right\}.$ The operation $A \odot B$ represents the element-wise (Hadamard) multiplication of matrices $A$ and $B$. The Kronecker product of $A$ and $B$ is denoted by $A \otimes B$. A row vector of zeros of appropriate dimensions is represented by $0^\top$.
The indicator function $\chi$ is defined such that $\chi(z) = 1$, if $z \neq 0$, and $\chi(z) = 0$, if $z = 0$. With this, we define $ {\|A\|}_{\ell_0}=\sum\nolimits_{i,j=1}^n \chi (a_{ij})$ and, similarly, ${\|A\|}_{\ell_1}=\sum\nolimits_{i,j=1}^n |a_{ij}|$. The operator $\Real$ 
maps a matrix or vector to its entry-wise real part. We denote by $|\mathcal{I}|$ the cardinality of a set $\mathcal{I}$. By $\mathbb{B}_\epsilon:=\{\Delta\in \mathbb{C}^{n\times n}\mid \|\Delta\|_F\leq \epsilon \}$ we represent a closed ball centered at $0$ with radius $\epsilon$ in  $\mathbb{C}^{n\times n}$, and $\|\cdot\|_F$ denotes the Frobenius norm.

\subsubsection{Spectral abscissa and worst-case perturbation}\label{sub:worst-case}
The spectral abscissa of a square matrix $A$ is defined by
\begin{equation}\label{eq:spec_abs}
    \alpha(A):=\max\limits_{\lambda\in\Lambda(A)} \Real \lambda,
\end{equation}
where $\Lambda(A)$ is the spectrum (the set of the eigenvalues) of $A$. Given $\epsilon>0$ and a closed set $\mathcal{H}\subseteq \mathbb{C}^{n\times n}$, the structured $\epsilon$-pseudospectrum of $A$ is 
\begin{equation}\label{eq:stru_spec}
    \Lambda_{\epsilon,\mathcal{H}}(A):=\{\lambda\in\Lambda(A+\Delta),  \Delta\in \mathcal{H} \cap \mathbb{B}_{\epsilon}\},
\end{equation}
which represents the union of the spectrum of $A+\Delta$, for $\Delta \in \mathcal{H} \cap \mathbb{B}_{\epsilon}$.  When 
$\mathcal{H}= \mathbb{C}^{n\times n}$, $\Lambda_{\epsilon,\mathcal{H}}$ reduces to the usual $\epsilon$-pseudospectrum~\cite{guglielmi2011fast}. The structured $\epsilon$-pseudospectral abscissa of $A$ is then expressed as 
\begin{equation}\label{eq:stru_abscissa}
    \alpha_{\epsilon,\mathcal{H}}(A):=\max\limits_{\lambda\in\Lambda_{\epsilon,\mathcal{H}}(A)} \Real \lambda = \max\limits_{\Delta\in \mathcal{H} \cap \mathbb{B}_{\epsilon}}\alpha(A+\Delta).
\end{equation}
We call a maximizer $\subscr{\Delta}{opt}$ of $\alpha_{\epsilon,\mathcal{H}}(A)$ as a worst-case perturbation of $A$ of energy $\epsilon$. 

\subsubsection{Floquet theory}\label{sub:Floquet}
Floquet theory~\cite{floquet1883equations} provides a mechanism to analyze linear time-varying periodic systems by means of an averaged time-invariant counterpart. Briefly, consider 
\begin{equation}\label{eq:ptvs}
  \dot{x}(t)=S(t)x(t),\hspace{4mm} x(t_0)=x_0,
\end{equation}
where $x(t)\in \mathbb{R}^n$, and $S(t)\in \mathbb{R}^{n\times n}$ is piecewise continuous, bounded, and periodic with period $T$, i.e.,  $S(t+T)=S(t)$. Floquet theory states that
\begin{itemize}
    \item There exists a nonsingular matrix $P(t,t_0)$ with $P(t+T,t_0)=P(t,t_0)$, and a constant matrix $Q$ such that 
    \begin{equation} \nonumber
        \Phi(t,t_0)=P(t,t_0)e^{Q(t-t_0)},
    \end{equation}
    where $\Phi(t,t_0)$ is the system state-transition matrix. 
    \item The time-dependent change of coordinates $z(t)=P^{-1}(t,t_0)x(t)$ transforms~\eqref{eq:ptvs} into the following linear time-invariant system
    \begin{equation} \label{eq:LTI_pre}
        \dot{z}(t)=Qz(t),\hspace{4mm} z(t_0)=x_0.
    \end{equation}
\end{itemize}    
Let 
\begin{equation}\label{eq:R_pre}
    R=\Phi(t_0+T,t_0),
\end{equation}
then $Q$ can be expressed as 
\begin{equation}\label{eq:Q_pre}
    Q=\frac{1}{T}\ln{R}.
\end{equation}
The eigenvalues of $Q$, $\lambda_i, i \in \until{n}$, and the eigenvalues of $R$, $\nu_i, i\in \until{n}$, are related as follows
\begin{equation}\nonumber
    \nu_i=e^{\lambda_iT},\hspace{4mm} i\in \until{n}.
\end{equation}
In this way, the linear time-varying system~\eqref{eq:ptvs} is exponentially stable if and only if $R$ is Schur or $Q$ is Hurwitz~\cite{gokccek2004stability}. 
Therefore, the stability of~\eqref{eq:ptvs} can be inferred from that of~\eqref{eq:LTI_pre}.
\section{Problem statement and applications}
In this section, we present a formal statement of the main problems to be addressed in the rest of the manuscript. In addition, we introduce application scenarios that show the relevance of the proposed problems. 

\subsection{System description}
Consider a \textit{network} represented by a directed graph or \textit{digraph} $\mathcal{G}:=(\mathcal{V},\mathcal{E})$, which consists of a vertex set $\mathcal{V}=\until{n}$ and an edge set $\mathcal{E}\subseteq \mathcal{V} \times \mathcal{V}$. 
An in-neighbor of $i \in \mathcal{V}$ is a vertex $j$ such that $(i,j) \in \mathcal{E}$. We denote the set of in-neighbors of $i$ by $\mathcal{N}_i$. By assigning a weight $a_{ij}\in \mathbb{R}$ to each edge $(i,j)\in \mathcal{E}$ and $a_{ij}=0$ if $(i,j)\notin \mathcal{E}$, a compatible network  dynamics is described as the time-invariant system 
\begin{equation}\label{eq:dyna}
    \dot{x}=Ax,
\end{equation}
where $x\in \mathbb{R}^n$ are the stacked states of all agents, and $A=[a_{ij}]\in \mathbb{R}^{n\times n}$ is the weighted adjacency matrix. We assume that the matrix $A$ is Hurwitz, therefore $\alpha(A)<0$. For brevity, we use either $A$ or the underlying digraph $\mathcal{G}$ to refer to a network. 
We are interested in reducing the impact of adversarial attacks on~\eqref{eq:dyna}  represented by multiplicative perturbations  $\Delta$ of the form: 
\begin{equation}\nonumber
    \dot{x}=(A+\Delta)x.
\end{equation}
Since the worst-case perturbation, given by~\eqref{eq:stru_spec} and~\eqref{eq:stru_abscissa}, depends on the network structure, it should be possible to adjust it to enhance its resilience. In particular, the smaller (i.e., more negative) $\alpha(A)$ is, the greater the resilience~\cite{liu2022iterative}. This function is inversely proportional to what can be interpreted as a \textit{distance-to-instability} metric, which is our goal to increase.

\subsection{Problem statement}
To augment the resilience of~\eqref{eq:dyna}, we will enlarge our system with time-varying actuation, as described by 
\begin{equation}\nonumber
    \dot{x}=S(t)x.
\end{equation}
This time-varying actuation will be limited to a few actions as specified by a finite set of topological network changes. In particular, we are interested in understanding
a) when it is possible to achieve a net resilience-improvement by topological switching, and b) how to design such new network topology and switching schedule to realize this benefit.  

To state these goals more precisely, consider two networks represented by weighted matrices $A, B\in \mathbb{R}^{n\times n}$ and associated digraphs $\mathcal{G}_A:=(\mathcal{V}_A,\mathcal{E}_A)$ and $\mathcal{G}_B:=(\mathcal{V}_B,\mathcal{E}_B)$, respectively.  A switched system defined by $A,B$ is given by
\begin{equation}\label{eq:switch}
\begin{aligned}
    \dot{x}(t)&=S(t)x(t)\\&=\begin{cases}
      Ax(t), & t_{i,1}+lT\leq t <t_{i,2}+lT,\\
      Bx(t), & t_{i,2}+lT\leq t <t_{i+1,1}+lT,\\
    \end{cases}  
\end{aligned}    
\end{equation}
where $i\in \until{m}$ and $m$ represents the occurrences of $A$ and $B$ during a time period $T$, and $t_{1,1}=0, t_{m+1,1}=T$; $l\in \untilzinf$. Furthermore, we define $\Delta t_{i,1}=t_{i,2}-t_{i,1}$ (resp.~$\Delta t_{i,2}=t_{i+1,1}-t_{i,2}$), $i\in \until{m}$, as the dwell time when $A$ (resp.~$B$) is active. 

The resilience of the switched system~\eqref{eq:switch} will be characterized via the spectral abscissa of an associated dynamical system. In the following, we denote this function as $\alpha(S(t))$. 


We now formally state the problems considered here.
\begin{problem}{\rm
    Given two 
    networks $A$, $B$ defining the switched system~\eqref{eq:switch}, quantify $\alpha(S(t))$ and derive an optimal switching between $A$, $B$ such that $\alpha(S(t))$ is minimized. In other words, solve for the optimization
    \begin{equation} \label{eq:optimiation_ori}
\begin{aligned}
\min_{\Delta t_{i,1}, \Delta t_{i,2}} \quad & \alpha(S(t))\\
\textrm{s.t.} \quad & 0< \Delta t_{i,1} < T,\\
& 0< \Delta t_{i,2} < T,\quad i\in \until{m},\\
& \sum_{i=1}^m (\Delta t_{i,1}+ \Delta t_{i,2})=T.\\
\end{aligned}
\end{equation}}
\oprocend
\end{problem}
\begin{problem}{\rm
Given a network $A$, optimize the structural changes of $A$ to obtain a resilience-enhancing complementary network $B$ such that $\alpha(S(t))<\min(\alpha(A),\alpha(B))$, via time-varying actuation as in~\eqref{eq:switch}. }\oprocend
\end{problem}

\subsection{Physical application scenarios}\label{sec:application}
This section presents some main applications of the resilience-enhancing topology-switching problems introduced above, showing how these relate to 
the abstracted dynamics~\eqref{eq:dyna}. Section~\ref{sec:simulations} will revisit these examples in simulation. 
\subsubsection{Formation control}
Consider a group of agents $i\in \until{n}$, 
and divide these into sets $\mathcal{V}_g$ and $\mathcal{V}_l$, $\mathcal{V}_g \cup \mathcal{V}_l = \until{n}$, where $\mathcal{V}_g$ consists of agents with privileged access to global information (e.g.~by interacting with a fusion center). 
Let $K_i \in \mathbb{R}^{d\times d}$ be a local gain matrix, for $i \in \mathcal{V}$, and $w_{ij}$ a non-zero positive weight for an edge $(i,j)$ in the agent-interaction graph $\mathcal{G} = (\mathcal{V}_g\cup \mathcal{V}_l, \mathcal{E})$. The associated  weighted Laplacian matrix 
$L_w=[l_{ij}]\in \mathbb{R}^{n\times n}$, is given by $l_{ij}=-w_{ij}\in \mathbb{R}$, if $i\neq j$, and $l_{ii}=\sum_{k = 1 }^n w_{ik}$, for $i \in \until{n}.$ 

Let $|\mathcal{V}_g|=n_g, $ $|\mathcal{V}_l|=n_l$, with $n_g + n_l =n$. Denote by $z_g\in \mathbb{R}^{d n_g}$ and $z_l\in \mathbb{R}^{d n_l}$ the stacked agents' states in $\mathcal{V}_g$ and $\mathcal{V}_l$, respectively. Let the weighted Laplacian matrix $L_w$ be divided into blocks $L_{gg}, L_{gl}, L_{gl}, L_{ll}$,  describing the interactions within and between the groups of agents $\mathcal{V}_g$ and $\mathcal{V}_l$, respectively. Similarly, define the block-diagonal gain matrices  $K_g = \diag{K_i\,|\, i \in \mathcal{V}_g}$, and $K_l = \diag{K_i\,|\, i\in \mathcal{V}_l}$. A generalized formation-control dynamics is given as
\begin{equation}\label{eq:s1_le}
    \dot{z}_g=A_gz_g+A_{gl}z_l+r^\star(t),
\end{equation}
\begin{equation}\label{eq:s1_fo}
    \dot{z}_l=A_lz_l+A_{lg}z_g,
\end{equation}
where $A_{g} =  K_{g} +  L_{gg} \otimes I_d$, $A_{gl} =  L_{gl} \otimes I_d $, $A_l =  K_{l} + L_{ll} \otimes I_d$, and $A_{lg} = L_{lg} \otimes I_d$. 
The time-varying mapping $r^\star:\mathbb{R}_{\geq 0}\rightarrow \mathbb{R}^{d n_g}$ encodes the information that only agents in $\mathcal{V}_g$ have access to. By stacking~\eqref{eq:s1_le} and~\eqref{eq:s1_fo}, defining $A$ in a block-wise manner via $A_{gg}, A_{gl}, A_{lg}, A_{gg}$, and setting $f(t) = [r(t)^\top, 0^\top]^\top \in \mathbb{R}^{dn}$, we can rewrite the overall dynamics 
as 
\begin{equation}\label{eq:s1_dyna_general}
    \dot{z}=Az+f(t).
\end{equation}
In order to write~\eqref{eq:s1_dyna_general} in the form of~\eqref{eq:dyna}, we perform the change of variables as $x=z-g(t)$, where $g(t)$ satisfies the differential equation $\dot{g}(t) = A g(t) + f(t)$, $g(0) = 0$, which can be calculated in a distributed way and treated as a reference signal for $z$. Then,~\eqref{eq:s1_dyna_general} is equivalent to $\dot{x}=Ax$. It is well known that it is possible to choose the set $\mathcal{V}_g$ and design limited-agent interactions to ensure that $A$ is a Hurwitz matrix~\cite{hu2010distributed}. 
Hence, $z$ will track the reference signal $g(t)$. A group of agents may utilize this approach to adjust the relative formation and move safely through a narrow corridor. However, a disruption in agent interactions may result into collision. Please refer to Section~\ref{sec:formation-control-example} for more information.

\subsubsection{Power systems}
The dynamics of generator buses in power networks are governed by the swing equation $M \ddot{\theta} + D \dot{\theta} + L \theta = u$, which is obtained after the linearization of the nonlinear power flow equations around a stationary operating point~\cite{khajenejad2025distributed,pasqualetti2013attack} 
and $L$ is the generator network Laplacian obtained after a Kron reduction~\cite{dorfler2014sparsity} of the load buses in the network admittance matrix. By neglecting the generator inertia dynamics, the inertia term can be omitted by setting $M = 0$, which yields a first-order network dynamics model~\cite{wu2014sparsity} that retains the essential interconnection topology, $\dot{\theta} = - D^{-1} L  \theta + D^{-1} u.$ A feedback gain matrix $K$ can be designed such that the closed-loop dynamics become $\dot{\theta} = - D^{-1} (L + K)  \theta,$ where the matrix $K$ represent either local or wide-area control interactions among the nodes~\cite{dorfler2014sparsity}. By properly designing $K$, the closed-loop matrix $A = - D^{-1} (L + K)$ can be made Hurwitz while remaining sparse. The Laplacian $L$ is fixed and characterizes the physical topology of the power network, whereas different designs of $K$ correspond to different feedback strategies. Switching between such feedback gains $K_1, K_2$ enables changes in the control interconnection structure and the overall resilience can be enhanced. We refer the reader to Section~\ref{subsec:power-network-example}, where we provide details of such an example and evaluate its resilience.

\section{Optimal switching for commutative networks}
In this section, we focus on Problem 1. We first propose a definition of $\alpha(S(t))$ via Floquet theory. 
Then, we present an algorithm to 
solve Problem~1.

\subsection{Resilience metric for switching systems}
As $S(t)$ is piecewise continuous, bounded and periodic; according to Floquet theory;  cf.~Section~\ref{sub:Floquet}, the time-averaged trajectories of~\eqref{eq:switch} can be captured by the solution of an associated system counterpart.  Thus, our quantification of $\alpha(S(t))$ will rely on the linear time-invariant system
\begin{equation} \label{eq:LTI}
        \dot{z}(t)=Qz(t).
\end{equation}
From~\eqref{eq:R_pre}, we can compute $R$ as
\begin{equation}\nonumber
\begin{aligned}
     R=\Phi(T,0):&=\prod_{i=1}^m e^{B\Delta t_{i,2}}e^{A\Delta t_{i,1}}\\&=e^{B\Delta t_{m,2}}e^{A\Delta t_{m,1}}\cdots e^{B\Delta t_{1,2}}e^{A\Delta t_{1,1}},
\end{aligned}     
\end{equation}
which implies that $Q$ in~\eqref{eq:Q_pre} can be obtained as
\begin{equation}\label{eq:Q}
    Q=\frac{1}{T}\ln{R}=\frac{1}{T}\ln(\prod_{i=1}^m e^{B\Delta t_{i,2}}e^{A\Delta t_{i,1}}).
\end{equation}

It is known that the switched system~\eqref{eq:switch} is exponentially stable if and only if $Q$ in~\eqref{eq:LTI} is Hurwitz~\cite{gokccek2004stability}. 
This naturally leads to the definition 
\begin{equation}\label{eq:metric}
  \alpha(S(t)):=\alpha(Q)=\max\limits_{\lambda\in\Lambda(Q)} \Real\lambda,
\end{equation}
where $\Lambda(Q)$ is the spectrum of $Q$. We further investigate~\eqref{eq:Q}  to understand how $\alpha(Q)$ relates to $\alpha(A)$, $\alpha(B)$. The  expression in~\eqref{eq:Q} can be further simplified by assuming that $A$ and $B$ commute, as stated in the following assumption. 
\begin{assumption}[Commutativity] \label{ass1}
The two networks represented by matrices $A$ and $B$ commute; i.e.~$AB=BA$. \oprocend 
\end{assumption}
\begin{assumption}[Network stability] \label{ass1_1}
The two networks represented by matrices $A, B\in \mathbb{R}^{n\times n}$ are Hurwitz; i.e., $\alpha(A), \alpha(B)<0$. \oprocend 
\end{assumption}

Leveraging Assumption~\ref{ass1}, $Q$ can be simplified to
\begin{equation}\nonumber
    Q=\frac{1}{T}\ln(e^{B\Delta t_2+A\Delta t_1}),
\end{equation}
where $\Delta t_1=\sum_{i=1}^m \Delta t_{i,1}$, $\Delta t_2=\sum_{i=1}^m \Delta t_{i,2}$ and $\Delta t_1+\Delta t_2=T$. Let $k$ denote the time ratio during which $A$ is active within one period, i.e., $k=\Delta t_1/T, 0\leq k \leq 1$. Then, $B$ is active within the ratio $1-k$. We can rewrite $Q$ as 
\begin{equation}\label{eq:Q_com2}
    Q=\frac{1}{T}({B\Delta t_2+A\Delta t_1})=(1-k)B+kA.
\end{equation}
Therefore, we can analyze~\eqref{eq:metric} based on the network properties of $A, B$ and the dwell-time decision variable $k$. 

\subsection{Time-varying actuation to enhance resilience}

The optimization problem~\eqref{eq:optimiation_ori} can be now reformulated as the following resilience-optimal switching $(\ROS)$ problem: 
\begin{equation} \nonumber
(\ROS)\hspace{5mm}
\begin{aligned}
\min_{k} \quad & \alpha(Q)=\alpha(kA+(1-k)B)\\
\textrm{s.t.} \quad & 0\leq k \leq 1.\\
\end{aligned}
\end{equation}
To solve it, we exploit the fact that commutative matrices are simultaneously upper triangularizable~\cite{horn2012matrix}. Therefore, 
\begin{equation}\label{eq:upptri}
    A=P^{-1}U_AP, \quad B=P^{-1}U_BP, 
\end{equation}
where $U_A$ and $ U_B$ are upper-triangular matrices with the diagonal entries given by the eigenvalues of $A$ and $B$, respectively, and $P$ is formed by the common eigenvectors to both matrices. Note that, if both $A$ and $B$ have distinct eigenvalues, $U_A$ and $U_B$ will be diagonal matrices.  In what follows, we group the eigenvalues of $A$ and $B$ in pairs, $(\lambda_i, \mu_i)$, so that $\lambda_i$, (resp.~$\mu_i$) is an eigenvalue of $A$ (resp.~of $B$), and both share the same eigenvector. Without loss of generality, we assume that $(\lambda_1,\mu_1)$ (resp.~$(\lambda_2,\mu_2)$) is the eigenvalue pair such that $\alpha(A) = \Real\lambda_1 \equiv \alpha_A$ (resp.~$\alpha(B) = \Real\mu_2 \equiv \alpha_B$). In this way, \eqref{eq:Q_com2}~becomes
\begin{equation}\nonumber
    Q=kA+(1-k)B=P^{-1}(kU_A+(1-k)U_B)P.
\end{equation}
Therefore, the spectrum of $Q$ is the same as the spectrum of $kU_A+(1-k)U_B$, which, in turn, it  also is an upper triangular matrix. More precisely, 
\begin{equation}\nonumber
    \Lambda(Q)=\{k\lambda_i+(1-k)\mu_i,|\,\lambda_i \in \Lambda(A), \mu_i \in \Lambda(B)\}.
\end{equation}
With this, $\ROS$ is equivalent to the problem of finding an optimal switching ratio  $k$ such that the real part of the rightmost eigenvalue of $Q$ is minimized; or, equivalently, so that the
so-called resilience-optimal switching $\rROS$ problem
\begin{equation}\nonumber
(\rROS)\hspace{2mm}
\begin{aligned}
\min_{k} \quad & x\\
\textrm{s.t.} \quad & 0\leq k \leq 1,\\
  &x\geq k\Real\lambda_i+(1-k) \Real\mu_i, \; i\in \until{n}, \\
\end{aligned}
\end{equation}
is solved. Denote by $k^\star$ the optimal $k$ that solves $\rROS$, and by $x^\star = \alpha^\star(Q)$  the optimal value that solves $\rROS$.

Next, we look for  conditions that can ensure 
$\alpha(S(t))<\min(\alpha(A),\alpha(B))$ while providing a bound for $\alpha^\star(Q)$, under the following assumption. 

\begin{assumption}[Unique Max]\label{ass2}
There is a unique eigenvalue of $A$ (resp.~$B$) that attains  $\alpha(A)$ (resp.~$\alpha(B)$). 
That is, $|\{i \mid \Real\lambda_i = \alpha(A)\}| = |\{j \mid \Real\mu_j = \alpha(B)\}| = 1$. \oprocend
\end{assumption}

\begin{theorem}[Bound on switched system resilience]\label{thm:1} 

Let $A$ and $B$ be two networks satisfying the commutativity Assumption~\ref{ass1}, and which define the corresponding switched system~\eqref{eq:switch}. 
If Assumption~\ref{ass2} on the unique max holds, then the resilience of the switched system, characterized by~\eqref{eq:metric}, is enhanced; i.e.~$\alpha^\star(Q) < \min (\alpha_A, \alpha_B)$, if and only if 
\begin{equation}\label{eq:iff_con}
    \beta_B < \alpha_A \text{ and } \beta_A < \alpha_B,
\end{equation}
where $(\alpha_A, \beta_B) = (\Real\lambda_1, \Real\mu_1)$ and $(\beta_A, \alpha_B) = ( \Real\lambda_2, \Real\mu_2)$, respectively. Furthermore, $\alpha^\star(Q)$ is bounded by the following inequality: 
\begin{equation}\label{eq:thm_bound}
\begin{aligned}
    &\max (\beta_A, \beta_B)<\alpha_A+\frac{\delta_A}{\delta_A+\delta_B}(\beta_B-\alpha_A)\\ &=\alpha_B+\frac{\delta_B}{\delta_A+\delta_B}(\beta_A-\alpha_B)\leq \alpha^\star(Q)< \min (\alpha_A,\alpha_B),
\end{aligned}
\end{equation}    
where $\delta_A=\alpha_A-\beta_A$ and $\delta_B=\alpha_B-\beta_B$.
\end{theorem}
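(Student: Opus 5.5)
Throughout I work with the scalar function
$f(k):=\max_{i\in\until{n}} \ell_i(k)$, where $\ell_i(k):=k\Real\lambda_i+(1-k)\Real\mu_i$. By the discussion preceding $\rROS$, $f(k)=\alpha(kA+(1-k)B)$ and $\alpha^\star(Q)=\min_{k\in[0,1]} f(k)$. The function $f$ is a maximum of affine functions, hence convex, piecewise linear and continuous. Its endpoint values are $f(1)=\alpha_A$ and $f(0)=\alpha_B$. Two of the lines carry all the information:
\[
\ell_1(k)=\alpha_A-(1-k)(\alpha_A-\beta_B), \qquad \ell_2(k)=\alpha_B-k(\alpha_B-\beta_A).
\]
I first dispose of the degenerate case where the pairs with indices $1$ and $2$ coincide. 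Then $\beta_B=\alpha_B$ and $\beta_A=\alpha_A$, so \eqref{eq:iff_con} would require $\alpha_B<\alpha_A<\alpha_B$, which is impossible. Also $f(k)\ge \ell_1(k)=\alpha_A=\alpha_B$ for all $k$, so no enhancement occurs and the equivalence holds trivially. Hence I assume the indices differ. Assumption~\ref{ass2} then gives $\beta_A<\alpha_A$ and $\beta_B<\alpha_B$, i.e.\ $\delta_A,\delta_B>0$.

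\emph{Necessity.} Suppose $\beta_B\ge\alpha_A$. Then $\ell_1(k)=k\alpha_A+(1-k)\beta_B\ge\alpha_A$ for all $k\in[0,1]$, so $f\ge\alpha_A\ge\min(\alpha_A,\alpha_B)$ and there is no strict improvement. The case $\beta_A\ge\alpha_B$ is symmetric, using $\ell_2$.

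\emph{Sufficiency and upper bound.} Assume \eqref{eq:iff_con}. Then $\ell_1$ is strictly increasing in $k$ with slope $\alpha_A-\beta_B>0$. By Assumption~\ref{ass2}, at $k=1$ every $\ell_i$ with $i\neq 1$ satisfies $\ell_i(1)=\Real\lambda_i<\alpha_A$. By continuity, for small $\varepsilon>0$ all these lines remain below $\alpha_A$ at $k=1-\varepsilon$. Moreover, $\ell_1(1-\varepsilon)<\alpha_A$, so $f(1-\varepsilon)<\alpha_A$. Symmetrically, using $\beta_A<\alpha_B$, we get $f(\varepsilon)<\alpha_B$. If $\alpha_A\le\alpha_B$ the first inequality gives $\alpha^\star(Q)<\min(\alpha_A,\alpha_B)$; otherwise the second one does.

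\emph{Lower bound.} Since $f\ge\max(\ell_1,\ell_2)$, and $\ell_1$ is increasing while $\ell_2$ is decreasing (slope $-(\alpha_B-\beta_A)<0$), the function $\max(\ell_1,\ell_2)$ is minimized at their crossing point $\hat k$. Solving $\ell_1(\hat k)=\ell_2(\hat k)$ and simplifying with $\delta_A,\delta_B$ yields the common value. Written from the $\ell_1$ side it is $\alpha_A+t(\beta_B-\alpha_A)$, and from the $\ell_2$ side it is $\alpha_B+(1-t)(\beta_A-\alpha_B)$, with $t=\delta_A/(\delta_A+\delta_B)$. I will verify the equality of the two expressions by direct algebra, using $\alpha_A-\beta_A=\delta_A$ and $\alpha_B-\beta_B=\delta_B$; this is the step requiring the most care with signs.

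Because $t\in(0,1)$, the crossing value lies strictly between $\beta_B$ and $\alpha_A$. By the second form, it also lies strictly between $\beta_A$ and $\alpha_B$. Hence it strictly exceeds $\max(\beta_A,\beta_B)$. I must confirm that $\hat k\in[0,1]$. This follows because $\ell_1(0)=\beta_B<\alpha_B=\ell_2(0)$ and $\ell_1(1)=\alpha_A>\beta_A=\ell_2(1)$, so the two lines cross inside the interval. Combining the pieces gives the chain \eqref{eq:thm_bound}.
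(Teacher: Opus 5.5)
Your proof is correct, but for the ``if and only if'' part it takes a more elementary route than the paper.

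For necessity, the paper uses the KKT conditions of the two-line relaxation $\AOone$: an interior $\underline{k}^\star$, both multipliers $\underline{\theta}_1^\star,\underline{\theta}_2^\star>0$, and hence $(\alpha_A-\beta_B)/(\beta_A-\alpha_B)<0$. You instead note that $\beta_B\ge\alpha_A$ makes the single line $\ell_1$ keep $f\ge\alpha_A$ on all of $[0,1]$, and symmetrically for $\ell_2$. This needs neither duality nor Assumption~\ref{ass2}.

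For sufficiency, the paper compares the remaining lines with the chord $\overline{f}(k)=\alpha_B+k(\alpha_A-\alpha_B)$. What that comparison actually yields is $f_i(k)<\overline{f}(k)$ pointwise. So an active $f_i$, $i\ge 3$, is only shown to lie below $\max(\alpha_A,\alpha_B)$, and the paper's passage to $\min(\alpha_A,\alpha_B)$ is loose. Your argument settles this cleanly: you perturb from the endpoint with the smaller abscissa, where all other lines are strictly below by Assumption~\ref{ass2} and $\ell_1$ or $\ell_2$ has strict slope. It also extends directly to the setting of Corollary~\ref{coro:1}.

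The lower bound in \eqref{eq:thm_bound} is derived identically in both proofs: minimize $\max(\ell_1,\ell_2)$, i.e., solve $\AOone$, at $\hat k=\delta_B/(\delta_A+\delta_B)$. The paper's KKT route does give an explicit picture of the relaxed optimizer (both dominant constraints active), which underlies its ``mismatch'' interpretation. It is not needed for the theorem.

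One small slip: in your degenerate case you write $\ell_1(k)=\alpha_A=\alpha_B$. In fact $\ell_1(k)=k\alpha_A+(1-k)\alpha_B\ge\min(\alpha_A,\alpha_B)$, so the conclusion still holds. The case is also already covered by your general necessity argument, since $\beta_B=\alpha_B$ and $\beta_A=\alpha_A$ force one inequality in \eqref{eq:iff_con} to fail.
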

\begin{proof}
We first prove the ``only if" part (the necessary condition); that is, if $\alpha^\star(Q) < \min(\alpha_A, \alpha_B)$, then condition~\eqref{eq:iff_con} must hold. According to $\rROS$, we want to minimize $x$ while satisfying the constraints $x\geq k\Real \lambda_i+(1-k) \Real\mu_i,$ for all $i\in \until{n}.$ 
Let us define $f_i(k)=k\Real \lambda_i+(1-k) \Real\mu_i$, $i \in \until{n}$. Without loss of generality, denote by $f_1(k)=k\alpha_A+(1-k) \beta_B$ and $f_2(k)=k\beta_A+(1-k) \alpha_B$, which we refer to as ``dominant segments".\footnote{We refer to $f_1,f_2$ as dominant segments because there are values of $k$ for which they dominate all others: for $k = 1$, we have $f_1(1) = \alpha_A \geq f_i(1)$ for all~$i$, and for $k = 0$, $f_2(0) = \alpha_B \geq f_i(0)$, for all~other $i$.}
Note that, if $k^\star=0$, the optimal value is $x^\star=\alpha_B$, which means that we fix the network to $B$. While when $k^\star=1$, the optimal value is  $x^\star=\alpha_A$, setting the network equal to $A$. 
Both cases are trivial and the resilience does not improve with respect to that provided by either $A$ or $B$. The only possible way to improve resilience; that is, $\alpha^\star(Q) < \min \{\alpha_A, \alpha_B\}$, requires that $k^\star\in (0,1)$. To study this case, we construct the following optimization problem: 
    \begin{equation}\nonumber
(\AOone)\hspace{2mm}
\begin{aligned}
\min_{k} \quad & x\\
\textrm{s.t.} \quad & 0\leq k \leq 1,\\
  &x\geq k\alpha_A+(1-k) \beta_B, \\
  &x\geq k\beta_A+(1-k) \alpha_B,   \\
\end{aligned}
\end{equation}
and let $\underline{x}^\star$ denote its optimal solution (similarly, let $\underline{k}^\star$ be the corresponding optimal $k$ for this problem). 
We know that $\underline{x}^\star \leq x^\star$, since the constraints of $\AOone$ form a subset of those of $\rROS$. The KKT conditions for $\AOone$ characterize its solutions and are obtained next. First, define
\begin{equation}\nonumber
\begin{aligned}
    &\mathcal{L}(k, x, \theta_1, \theta_2, \nu_1, \nu_2) = x + \theta_1 \left( k \alpha_A + (1-k) \beta_B - x \right) \\&+ \theta_2 \left( k \beta_A + (1-k) \alpha_B - x \right) - \nu_1 k + \nu_2 (k - 1),
\end{aligned}    
\end{equation}
where $\theta_1, \theta_2 \geq 0$ are the Lagrange multipliers associated with $x\geq f_1(k)$ and $x\geq f_2(k)$; respectively, $ \nu_1, \nu_2 \geq 0$ are the Lagrange multipliers associated with $0\leq k\leq 1$. The KKT conditions for the optimal solutions are as follows: 
\begin{itemize}
    \item Stationarity: First, $ \frac{\partial \mathcal{L}}{\partial x} = 1 - \underline{\theta}_1^\star - \underline{\theta}_2^\star = 0,$ which leads to $\underline{\theta}_1^\star + \underline{\theta}_2^\star = 1$. Second, $\frac{\partial \mathcal{L}}{\partial k} = \underline{\theta}_1^\star \left( \alpha_A - \beta_B \right) + \underline{\theta}_2^\star \left( \beta_A - \alpha_B \right) - \underline{\nu}_1^\star + \underline{\nu}_2^\star = 0.$
    \item Primal feasibility: 
    \begin{equation} \nonumber
         \begin{aligned}
   &\underline{x}^\star \geq \underline{k}^\star \alpha_A + (1-\underline{k}^\star) \beta_B, \\
   &\underline{x}^\star \geq \underline{k}^\star \beta_A + (1-\underline{k}^\star) \alpha_B, \\
   & 0 \leq \underline{k}^\star \leq 1.
   \end{aligned}
    \end{equation}
    \item Dual feasibility: $\underline{\theta}_1^\star, \underline{\theta}_2^\star, \underline{\nu}_1^\star, \underline{\nu}_2^\star \geq 0.$
    \item Complementary slackness: 
    \begin{equation} \nonumber
         \begin{aligned}
   &\underline{\theta}_1^\star ( \underline{k}^\star \alpha_A + (1-\underline{k}^\star) \beta_B - \underline{x}^\star) = 0, \\
   &\underline{\theta}_2^\star ( \underline{k}^\star \beta_A + (1-\underline{k}^\star) \alpha_B - \underline{x}^\star) = 0, \\
   & \underline{\nu}_1^\star \cdot (-\underline{k}^\star) = 0,\\
   & \underline{\nu}_2^\star \cdot (\underline{k}^\star - 1) = 0.
   \end{aligned}
    \end{equation}
    
\end{itemize}
Note that, if  $x^\star = \alpha^\star(Q)< \min (\alpha_A,\alpha_B)$, then $\underline{x}^\star  < \min (\alpha_A,\alpha_B)$. Therefore,  it must be  that $0< \underline{k}^\star < 1$. This, together with the complementary slackness, implies that $\underline{\nu}_1^\star=\underline{\nu}_2^\star=0$, which simplifies the stationarity conditions as
         \begin{align}
   &\underline{\theta}_1^\star+\underline{\theta}_2^\star=1, \label{eq:KKT_equality1}\\
   & \underline{\theta}_1^\star \left( \alpha_A - \beta_B \right) + \underline{\theta}_2^\star \left( \beta_A - \alpha_B \right)=0.\label{eq:KKT_equality3}
   \end{align}
If $\underline{\theta}_1^\star=0$ or $\underline{\theta}_2^\star=0$ (or equivalently, from~\eqref{eq:KKT_equality1}, if $\underline{\theta}_2^\star=1$ or 
$\underline{\theta}_1^\star=1$,  respectively), then, from~\eqref{eq:KKT_equality3}, we obtain $\beta_A = \alpha_B$ or $\alpha_A = \beta_B$; respectively. From the first two equations of Complementary slackness, this implies that $\underline{x}^\star$ will either be $\underline{x}^\star = \alpha_B$ or $\underline{x}^\star = \alpha_A$; respectively, and resilience is not improved. Therefore, $\underline{\theta}_1^\star,\underline{\theta}_2^\star> 0$ must hold, which confirms that the dominant-segment constraints are active, 
\begin{equation}\label{eq:eqaility_k}
    \underline{x}^\star = \underline{k}^\star \alpha_A + (1-\underline{k}^\star) \beta_B=\underline{k}^\star \beta_A + (1-\underline{k}^\star) \alpha_B.
\end{equation}

Assume now that $\underline{\theta}_1^\star, \underline{\theta}_2^\star > 0$ and consider~\eqref{eq:KKT_equality3}. The latter holds in two possible cases. First, suppose that $\alpha_A - \beta_B = 0$ and $\beta_A - \alpha_B = 0$. From~\eqref{eq:eqaility_k}, it must be that $\alpha_A = \beta_B = \beta_A = \alpha_B$. This leads to a contradiction with Assumption~\ref{ass2}, on the Unique Max. The second case is $\frac{\alpha_A - \beta_B}{\beta_A - \alpha_B} = -\frac{\underline{\theta}_2^\star}{\underline{\theta}_1^\star} < 0$. 
Due to the definition of the spectral abscissa~\eqref{eq:spec_abs} and Assumption~\ref{ass2}, the conditions $\beta_B > \alpha_A$ and $\beta_A > \alpha_B$ cannot both be true, as it leads to  $\beta_A > \alpha_A$, a contradiction. Consequently, $(\alpha_A - \beta_B)(\beta_A - \alpha_B)^{-1} < 0$ only when~\eqref{eq:iff_con} holds.

The ``if" part (the sufficient condition) follows from geometric considerations. Since $\alpha_A > \beta_A$ and $\alpha_B > \beta_B$, then $f_1(k)$ (resp.~$f_2(k)$) is increasing (resp.~decreasing) over $[0,1]$. 
Note that, under condition~\eqref{eq:iff_con} and Assumption~\ref{ass2}, the solution to $\AOone$, $\underline{k}^\star \in (0,1)$ and $\underline{x}^\star$ must be attained at the value of intersection, $\underline{x}^\star = f_1(\underline{k}^\star) = f_2(\underline{k}^\star)$ (otherwise, $\underline{x}^\star$ is achieved at $\underline{k}^\star = 0$ or $\underline{k}^\star = 1$, which implies $\alpha_A = \beta_A$, or $\alpha_B = \beta_B$, contradicting Assumption~\ref{ass2}). We then investigate the case when $x\geq f_i(k)$ is active, for some $i\in \{3,\dots, n\}$.  Consider the segment $\overline{f}(k) = \alpha_B + k (\alpha_A - \alpha_B)$, $k \in [0,1]$. Note that it always holds that $\overline{f}(k) > f_i(l)$, for all $l,k \in [0,1]$, and for any $f_i(k) = \Real(\mu_i) + k (\Real(\lambda_i) - \Real(\mu_i))$, $i \in \{3,\dots,n\}$. Otherwise, and due to Assumption~\ref{ass2}, both segments intersect at a $\overline{k} \in (0,1)$, and either $f_i$ dominates over $\overline{f}$ on $(\overline{k},1]$, or on $[0,\overline{k})$. But if the former, then  $\alpha_A = \overline{f}(1) < f_i(1) = \Real(\lambda_i)$, and, if the latter, then $\alpha_B = \overline{f}(0) < f_i(0) = \Real(\mu_i)$, a contradiction. Therefore, if a solution $k^\star$ is attained at an $f_i$, $i \in \{3,\dots,n\}$, we have that either $x^\star = f_i(k^\star) < \alpha_A = \overline{f}(1)$, $ x^\star = f_i(k^\star) < \alpha_B=\overline{f}(0)$. In other words, $x^\star = \alpha^\star(Q) < \min \{\alpha_A,\alpha_B\}$.

To prove the inequality~\eqref{eq:thm_bound}, we already have that $\alpha^\star(Q) < \min(\alpha_A, \alpha_B)$ provided that condition~\eqref{eq:iff_con} is satisfied. We then solve~\eqref{eq:eqaility_k}, and obtain $\underline{x}^\star$ as $\underline{x}^\star=\alpha_A+\frac{\delta_A}{\delta_A+\delta_B}(\beta_B-\alpha_A)=\alpha_B+\frac{\delta_B}{\delta_A+\delta_B}(\beta_A-\alpha_B)$ with $\delta_A=\alpha_A-\beta_A$ and $\delta_B=\alpha_B-\beta_B$, and the corresponding $k=\frac{\delta_B}{\delta_A+\delta_B}\in (0,1)$. 
Given that $\underline{x}^\star \le \alpha^\star(Q) $, the equality and second inequality from the left of~\eqref{eq:thm_bound} hold. Furthermore, $\alpha_A+\frac{\delta_A}{\delta_A+\delta_B}(\beta_B-\alpha_A)-\beta_B=k(\alpha_A-\beta_B)>0$ and $\alpha_B+\frac{\delta_B}{\delta_A+\delta_B}(\beta_A-\alpha_B)-\beta_A=(1-k)(\alpha_B-\beta_A)>0$. 
This establishes the satisfaction of~\eqref{eq:thm_bound}. 
\end{proof}

We generalize Theorem~\ref{thm:1} to the case of multiple eigenvalues, and the results are summarized in Corollary~\ref{coro:1}. We first define the index set $\mathcal{J}$ as $\mathcal{J}=\{j\mid (\lambda_j,\mu_j) \mbox{ with }\Real \lambda_j=\alpha_A \mbox{ or }\Real \mu_j=\alpha_B\}$ i.e., the pairs of eigenvalues for which either $\lambda_j$ or $\mu_j$ attain the spectral abscissa value. The dominant segments are defined as $f_j(k)=k\Real \lambda_j+(1-k)\Real \mu_j, j\in \mathcal{J}$, and another auxiliary optimization problem is proposed:
    \begin{equation}\nonumber
(\AOtwo)\hspace{2mm}
\begin{aligned}
\min_{k} \quad & x\\
\textrm{s.t.} \quad & 0\leq k \leq 1,\\
  &x\geq f_j(k), j\in \mathcal{J}.
\end{aligned}
\end{equation}

\begin{corollary}[Bound on switched system resilience, general case]\label{coro:1} 
Let the prerequisites of Theorem~\ref{thm:1} hold, except for Assumption~\ref{ass2}. Then, $\alpha^\star(Q) < \min (\alpha_A, \alpha_B)$ if and only if the following conditions hold:
\begin{equation}\label{eq:iff_con_2}
\begin{aligned}
    &\mbox{if }\Real \lambda_i=\alpha_A \mbox{ for some } i, \mbox{then }\Real\mu_i<\alpha_A;
\end{aligned}    
\end{equation}
\begin{equation}\label{eq:iff_con_3}
\begin{aligned}
    &\mbox{if }\Real \mu_j=\alpha_B \mbox{ for some } j, \mbox{then }\Real\lambda_j<\alpha_B.
\end{aligned}    
\end{equation}
Furthermore, $\alpha^\star(Q)$ is bounded by the following inequality:
\begin{equation}\label{eq:thm_bound_coro}
    \underline{x}^{\star\star}\leq \alpha^\star(Q)< \min (\alpha_A,\alpha_B),
\end{equation}      
where $\underline{x}^{\star\star}$ is the optimal value of problem $\AOtwo$.
\end{corollary}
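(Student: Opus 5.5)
We reduce the problem to a statement about the maximum of affine functions of $k$. Write $f_i(k)=k\Real\lambda_i+(1-k)\Real\mu_i$ for $i\in\until{n}$, and let $F(k)=\max_i f_i(k)$. This $F$ is convex and piecewise affine, and by $\rROS$ we have $\alpha^\star(Q)=\min_{k\in[0,1]}F(k)$. Moreover $F(1)=\alpha_A$ and $F(0)=\alpha_B$. Also, for every $i$, $f_i(0)=\Real\mu_i\le\alpha_B$ and $f_i(1)=\Real\lambda_i\le\alpha_A$. Without loss of generality take $\alpha_A\le\alpha_B$ (the other case is symmetric), so that $\min(\alpha_A,\alpha_B)=\alpha_A$. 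The whole question is then whether $F$ dips strictly below its endpoint values somewhere in $(0,1)$.

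\emph{Necessity.} Suppose \eqref{eq:iff_con_2} fails: some $i$ has $\Real\lambda_i=\alpha_A$ and $\Real\mu_i\ge\alpha_A$. Then $f_i(k)\ge\alpha_A$ for every $k\in[0,1]$, since $f_i$ is a convex combination of two numbers that are both $\ge\alpha_A$. Hence $\alpha^\star(Q)\ge\alpha_A\ge\min(\alpha_A,\alpha_B)$. In the same way, if \eqref{eq:iff_con_3} fails, some $f_j$ satisfies $f_j\ge\alpha_B$ on $[0,1]$, so $\alpha^\star(Q)\ge\alpha_B$. Either way the strict improvement is impossible.

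\emph{Sufficiency.} This is the main step. We must exhibit $k\in(0,1)$ with $F(k)<\min(\alpha_A,\alpha_B)$, and the difficulty is that when $\alpha_A\neq\alpha_B$ the bound must hold against the smaller endpoint. I would split the indices into three groups.
\begin{itemize}
\item[(a)] Indices with $\Real\lambda_i=\alpha_A$. By \eqref{eq:iff_con_2}, $\Real\mu_i<\alpha_A\le\alpha_B$, so $f_i(k)<\alpha_A$ for all $k\in[0,1)$.
\item[(b)] Indices with $\Real\lambda_i<\alpha_A$. Here $f_i(1)<\alpha_A$, so by continuity $f_i(k)<\alpha_A$ for $k$ close to $1$. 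Since there are finitely many such indices, a common neighborhood $(1-\eta,1]$ works.
\item[(c)] Indices with $\Real\mu_j=\alpha_B$. These fall under (a) or (b) according to $\Real\lambda_j$, and \eqref{eq:iff_con_3} then gives $\Real\lambda_j<\alpha_B$. In the case $\alpha_A<\alpha_B$, this is not needed for the bound, because near $k=1$ every $f_j$ is close to $\Real\lambda_j\le\alpha_A$ anyway.
\end{itemize}
Taking $k$ slightly less than $1$ therefore gives $F(k)<\alpha_A=\min(\alpha_A,\alpha_B)$. When $\alpha_A=\alpha_B$ the same argument works, with \eqref{eq:iff_con_3} covering the indices where $\Real\mu_j=\alpha_B$. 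So the only subtle point is checking that the slopes at the endpoint near $\min(\alpha_A,\alpha_B)$ have the right sign. This also matches the geometric argument used for Theorem~\ref{thm:1}.

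\emph{Bound \eqref{eq:thm_bound_coro}.} Problem $\AOtwo$ keeps only the constraints indexed by $\mathcal{J}\subseteq\until{n}$. It is therefore a relaxation of $\rROS$, so $\underline{x}^{\star\star}\le x^\star=\alpha^\star(Q)$. The upper inequality $\alpha^\star(Q)<\min(\alpha_A,\alpha_B)$ is exactly the sufficiency part established above, which completes the proof.
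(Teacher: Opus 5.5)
Your proof is correct, and it takes a genuinely different route from the paper.

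The paper only sketches the corollary and defers to the proof of Theorem~\ref{thm:1}. There, necessity comes from the KKT system of the two-segment LP $\AOone$: both multipliers must be positive, so both dominant constraints are active and their slopes $\alpha_A-\beta_B$ and $\beta_A-\alpha_B$ must have opposite signs, which yields \eqref{eq:iff_con}. Sufficiency there comes from a geometric argument that places the optimum of $\AOone$ at the intersection of the dominant segments and compares the remaining segments with the chord $\overline{f}$. The general case is then asserted by requiring the analogous condition for every pair indexed by $\mathcal{J}$, with $\AOtwo$ replacing $\AOone$.

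You instead argue directly on the envelope $F(k)=\max_i f_i(k)$. For necessity, a single segment that stays at or above an endpoint value on all of $[0,1]$ rules out improvement. For sufficiency, a one-sided perturbation from the smaller endpoint gives $k\in(0,1)$ with $F(k)<\min(\alpha_A,\alpha_B)$. It uses strict inequality on the indices attaining the abscissa and continuity on the finitely many others.

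This buys several things:
\begin{itemize}
\item the argument is self-contained and needs no multiplier analysis;
\item it handles repeated maximizers directly rather than by analogy;
\item it does not rely on the paper's comparison with $\overline{f}$;
\item it shows that once $\alpha_A\le\alpha_B$, condition \eqref{eq:iff_con_3} holds automatically because $\Real\lambda_j\le\alpha_A$, so only the condition at the smaller endpoint carries content.
\end{itemize}
Your remark that \eqref{eq:iff_con_3} is used when $\alpha_A=\alpha_B$ is harmless but unnecessary, since cases (a) and (b) already cover every index.

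What the paper's route offers in exchange is structural information about the relaxation: its optimum sits at the intersection of dominant segments, which in Theorem~\ref{thm:1} yields the closed-form lower bound. The corollary, however, only claims $\underline{x}^{\star\star}\le\alpha^\star(Q)$, and your relaxation argument establishes this exactly as the paper does.
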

\begin{proof}
    The proof extends the arguments of Theorem~\ref{thm:1}. In cases where multiple eigenvalues achieve the spectral abscissa value, every pair must satisfy a similar if and only if condition as stated in~\eqref{eq:iff_con}, leading to the conditions~\eqref{eq:iff_con_2} and~\eqref{eq:iff_con_3}. Moreover, since there are more dominant segments serving as constraints in $\AOtwo$ compared to $\AOone$, the resulting lower bound is obtained by solving $\AOtwo$. Consequently, the bound in~\eqref{eq:thm_bound_coro} is a generalization of~\eqref{eq:thm_bound}.   
\end{proof}
\begin{remark}
[Enhancing resilience] {\rm To augment resilience through time-varying actuation,  the networks $A$ and $B$ need to exhibit opposite properties that lead to a “mismatch” in their eigenvalues, as indicated by~\eqref{eq:iff_con},~\eqref{eq:iff_con_2},~\eqref{eq:iff_con_3}. Furthermore, the bounds~\eqref{eq:thm_bound} and~\eqref{eq:thm_bound_coro} are linked to the eigenvalue pairs that determine the resilience of each individual network. }\oprocend
\end{remark}

Finally, we summarize in Algorithm~\ref{algorithm1} the proposed approach for deriving the optimal switching between $A$ and $B$. 

\begin{algorithm}[!ht]
\PrintSemicolon
  \KwInput{$A, B$ s.t. $AB=BA$}
  \KwOutput{$k^\star,\alpha^\star(Q)$}

  Upper triangularize $A, B$ as~\eqref{eq:upptri}\\
  Pair the eigenvalues $(\lambda_i, \mu_i)$ of $A, B$  with respect to the same eigenvector\\
  Compute the real part of $\lambda_i, \mu_i$ \\
  Solve the linear programming $\rROS$\\
  \Return  $k^\star,\alpha^\star(Q)$
\caption{$\boldsymbol{\optswitch}$ --- Computation of \textbf{Opt}imal \textbf{Switch}ing}
\label{algorithm1}
\end{algorithm}


\section{Network structural optimization under homogeneous sparsity constraints}
In this section, we focus on  Problem~2 to optimize resilience from a topological-design perspective. In this way, given a network $A$, we aim to find a resilience-enhancing complementary network, $B$, which we can use together with an optimal switching strategy as in~\eqref{eq:switch} so that $\alpha(S(t))<\min(\alpha(A),\alpha(B))$. In doing so, we will consider various optimization formulations. 

Our first problem, $\lzeroRec$, is the following:


\begin{equation}\nonumber
(\lzeroRec)\hspace{5mm}
\begin{aligned}
\min_{k, B} \quad & \alpha(kA+(1-k)B)+ {\|\Gamma \odot B\|}_{\ell_0}\\
\textrm{s.t.} \quad & AB=BA, \\
& 0\leq k \leq 1, \\
& \Tr(B)=\Tr(A). 
\end{aligned}
\end{equation}
In $\lzeroRec$, we optimize over $B$ commutative with $A$, as well as over the switching  between $A$ and $B$. The objective function consists of two parts: The first part is as in~$\ROS$ (or~$\rROS$), to optimize the topology-switching ratio, while the second part involves $ {\|\cdot\|}_{\ell_0}$,
a sparsity-promoting penalty function defined on the Hadamard product of $B$ with a weighting matrix $\Gamma:=[\gamma_{ij}]\in \mathbb{R}^{n\times n}$.
The constraint $AB=BA$ ensures commutativity, while the second  $0\leq k \leq 1 $ characterizes  the activation of $A$ within one period. The third constraint $\Tr(B)=\Tr(A)$ is introduced 
to ensure that 
$B$ maintains a spectral scale that is comparable to that of $A$.

The $\ell_0$-norm is commonly used to characterize sparsity. 
Unfortunately, this leads to optimization problems that are nonconvex and NP-hard. 
A common approach to address this issue is to replace the $\ell_0$-term ${\|\cdot\|}_{\ell_0}$ with an $\ell_1$-term, ${\|\cdot\|}_{\ell_1}$. 
The $\ell_1$ relaxation results into 
\begin{equation}\nonumber
(\loneRec)\hspace{5mm}
\begin{aligned}
\min_{k, B} \quad & \alpha(kA+(1-k)B)+ {\|\Gamma \odot B\|}_{\ell_1}\\
\textrm{s.t.} \quad & AB=BA, \\
& 0\leq k \leq 1, \\
& \Tr(B)=\Tr(A). 
\end{aligned}
\end{equation}

Problems $\loneRec$ and $\lzeroRec$ are in general not equivalent, being the first a relaxation of the second.

In what follows, we characterize the algebraic conditions on $B$ that ensure it commutes with $A$. Building upon these properties, we can rewrite $\loneRec$ and obtain new insight on its relationship with $\lzeroRec$. For a given $A$, $\mathcal{C}(A)=\{B\mid AB=BA\}$ denotes the \textit{centralizer} of $A$~\cite{horn2012matrix}. Let $\lambda_{A}=[\lambda_1,\cdots,\lambda_n]^T$ denote the vector of all eigenvalues of $A$. The Vandermonde matrix $\Lambda_p$ formed by $\lambda_{A}$ is defined as 
\begin{equation}\label{eq:vandermonde}
       \Lambda_p:= \begin{bmatrix}
1 & \lambda_1 & \lambda_1^2 & \cdots & \lambda_1^{n-1} \\ 
1 & \lambda_2 & \lambda_2^2 & \cdots & \lambda_2^{n-1} \\
\vdots & \vdots & \vdots & \ddots &\vdots \\
1 & \lambda_n & \lambda_n^2 & \cdots & \lambda_n^{n-1} 
\end{bmatrix}.
    \end{equation}
The following lemma states the algebraic properties of $\mathcal{C}(A)$ using the powers of $A$, which follows from Theorem~3.2.4.2 of~\cite{horn2012matrix}. Here, we provide an alternative proof to this result via the Vandermonde matrix, which will be helpful in the sequel.  
\begin{lemma}[Algebraic characterization of a matrix centralizer]\label{lemma:1} 
Suppose that $A\in \mathbb{R}^{n\times n}$ has distinct eigenvalues. Then $B\in \mathbb{R}^{n\times n}$ is commutative with $A$ if and only if $B\in \mathcal{C}(A)=\spa \{I, A, A^2,\cdots, A^{n-1}\}$. 
\end{lemma}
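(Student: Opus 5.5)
The plan is to diagonalize $A$ and then use the Vandermonde matrix~\eqref{eq:vandermonde} to move between ``diagonal in the eigenbasis of $A$'' and ``polynomial in $A$''.

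\emph{Easy direction.} If $B=\sum_{i=0}^{n-1}c_iA^i$, then $B$ commutes with $A$ because each power $A^i$ does. This inclusion needs nothing about the spectrum.

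\emph{Converse.} Since the eigenvalues $\lambda_1,\dots,\lambda_n$ of $A$ are distinct, $A$ is diagonalizable: $A=P^{-1}DP$ with $D=\diag{\lambda_A}$ and $P\in\mathbb{C}^{n\times n}$ invertible. Set $\tilde B=PBP^{-1}$. Then $AB=BA$ is equivalent to $D\tilde B=\tilde B D$. Entrywise this reads $(\lambda_i-\lambda_j)\tilde b_{ij}=0$, and distinctness forces $\tilde b_{ij}=0$ for $i\neq j$. So $\tilde B=\diag{\mathrm{v}}$ for some $\mathrm{v}\in\mathbb{C}^n$.

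\emph{Interpolation step.} We look for coefficients $c=[c_0,\dots,c_{n-1}]^T$ with $\sum_k c_k\lambda_i^k=v_i$ for every $i$, that is, $\Lambda_p c=\mathrm{v}$. Because the $\lambda_i$ are distinct, $\det\Lambda_p=\prod_{i<j}(\lambda_j-\lambda_i)\neq 0$. Hence $c=\Lambda_p^{-1}\mathrm{v}$ exists and is unique. Then $\diag{\mathrm{v}}=\sum_k c_kD^k$, and conjugating back gives $B=\sum_k c_kA^k$.

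\emph{Main obstacle: real coefficients.} The span in the statement is over $\mathbb{R}$, so we must show $c\in\mathbb{R}^n$ when $A$ and $B$ are real. Complex conjugation takes the identity $B=\sum c_kA^k$ to $B=\sum \bar c_kA^k$. Uniqueness then gives $c=\bar c$, provided $I,A,\dots,A^{n-1}$ are linearly independent over $\mathbb{C}$. They are, because $\sum d_kA^k=0$ implies $\Lambda_p d=0$ (apply the relation to each eigenvector), and $\Lambda_p$ is invertible, so $d=0$.

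The same independence shows $\dim\mathcal{C}(A)=n$ with basis $\{I,A,\dots,A^{n-1}\}$. This is the form to be reused later in the paper.
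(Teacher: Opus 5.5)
Your proof is correct and takes essentially the same route as the paper. You reduce to a diagonal form of $A$, show the conjugated $B$ is diagonal via $(\lambda_i-\lambda_j)\tilde b_{ij}=0$, solve the Vandermonde system $\Lambda_p c=\mathrm{v}$, and deduce real coefficients from linear independence of $I,A,\dots,A^{n-1}$. Your version is slightly tighter than the paper's in two places: you diagonalize $A$ directly rather than invoking simultaneous upper-triangularization and then asserting the triangular form of $A$ is diagonal, and you prove the linear independence over $\mathbb{C}$, which the paper takes for granted.
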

\begin{proof}
The ``if'' part follows immediately: we first write $B$ as a linear combination of the basis matrices, i.e., $B=\sum_{i=0}^{n-1}c_iA^i=c_0I+c_1A\dots+c_{n-1}A^{n-1}$. We then can verify that $AB=BA=\sum_{i=0}^{n-1}c_iA^{i+1}$, thus $B$ is commutative with $A$. The ``only if'' part can be proved via Theorem~3.2.4.2 of~\cite{horn2012matrix} directly.
Instead, we  prove it by leveraging matrix properties to establish the connection between the $A, B$ spectra. Since $A, B$ are commutative, they are simultaneously upper-triangularizable as $A=P^{-1}XP, B=P^{-1}YP$, where $X$ and $ Y$ are the upper-triangular matrices with diagonal entries given by the eigenvalues of $A$ and $B$, respectively. Furthermore, $X $ is diagonal, $X = \diag{\lambda_1,\cdots, \lambda_n}$, with the diagonal containing the distinct eigenvalues of $A$. Since $AB=BA$, then $XY=YX$, which yields $\lambda_iy_{ij}=\lambda_jy_{ij}$ where $Y = (y_{ij})$. Due to the fact that $A$ has distinct eigenvalues, $(\lambda_i - \lambda_j) y_{ij}  =0$ implies $y_{ij}=0$ for $i\neq j$. Therefore, $Y = \diag{\mu_1,\cdots, \mu_n}$ is also diagonal with its eigenvalues in the diagonal. Note that the statement we aim to prove is invariant under similarity transformations $P$, and thus, w.l.o.g.~we can focus on diagonal matrices. We have that $\sum_{i=0}^{n-1}c_iX^i=\diag{\sum_{i=0}^{n-1}c_i\lambda_1^i,\cdots, \sum_{i=0}^{n-1}c_i\lambda_n^i}$. 
Therefore, we obtain $\Lambda_pc=\mu_B$, where $\Lambda_p$ is the Vandermonde matrix as in~\eqref{eq:vandermonde},  $c=[c_0, c_1, \cdots, c_{n-1}]^T$, and $\mu_{B}=[\mu_1, \mu_2,\cdots,\mu_n]^T$.
The Vandermonde matrix $\Lambda_p$ is invertible since $X$ has distinct eigenvalues~\cite{horn2012matrix}, thus we can always obtain a coefficient vector $c$ that is a solution to the previous equation. Pre and post-multiplying $\Lambda_p c = \mu_B$ by $P^{-1}, P$, respectively, implies that $B= \sum_{i=0}^{n-1} c_i A^i, c_i \in \mathbb{C}$.  This allows us to show that $c \in \mathbb{R}^n$. Specifically, let $c_i = a_i + b_i \mathrm{i}$ for $i \in \untilzn{n-1}$, where $\mathrm{i}$ is the imaginary unit, and $a_i, b_i \in \mathbb{R}$. Then, $B = \left( \sum_{i=0}^{n-1} a_i A^i \right) + \mathrm{i} \left( \sum_{i=0}^{n-1} b_i A^i \right)$. Since $A, B \in \mathbb{R}^{n \times n}$ and the set $\{I, A, A^2, \ldots, A^{n-1}\}$ is linearly independent, it follows that $b_i = 0$ for all $i$. This completes the proof. 
\end{proof}

We now establish the equivalence between $\lzeroRec$, and $\loneRec$, under certain choice of weighting parameters. 

\begin{theorem}[Optima of $\lzeroRec$ and $\loneRec$]\label{thm:2} Suppose $\gamma_{ij}$ of the sparsity-promoting weighting matrix, $\Gamma$, are all equal; that is, $\gamma_{ij} = \gamma$, which we refer to as a homogeneous sparsity constraint. If the optimal $B^\star$ for $\lzeroRec$ (resp. $\loneRec$) satisfies Assumption~\ref{ass1_1}, on network stability, then $B^\star=\frac{\Tr(A)}{n}I$ is an optimal solution for both $\lzeroRec$ and $\loneRec$.
\end{theorem}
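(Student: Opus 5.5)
Our plan is to bound the two parts of the objective separately, for any feasible pair $(k,B)$, and show that $B=\frac{\Tr(A)}{n}I$ attains both bounds simultaneously. Set $\tau:=\Tr(A)/n<0$; it is negative because $A$ is Hurwitz, so the real parts of its eigenvalues, and hence $\Tr(A)$, are negative.

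\textbf{Step 1 (spectral part).} Let $B$ be feasible and commute with $A$. As in~\eqref{eq:upptri}, $A$ and $B$ are simultaneously upper triangularizable, so $kA+(1-k)B$ has eigenvalues $k\lambda_i+(1-k)\mu_i$. Hence
\[
\alpha(kA+(1-k)B)\ \geq\ \frac{1}{n}\sum_{i=1}^n \Real\big(k\lambda_i+(1-k)\mu_i\big)=\frac{k\Tr(A)+(1-k)\Tr(B)}{n}=\tau ,
\]
where the inequality holds because the maximum of the real parts is at least their average, and the equality uses $\Tr(B)=\Tr(A)$. So the first term is always at least $\tau$, whatever $k$ is. For $B=\tau I$ we get $kA+(1-k)\tau I$. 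With $k=0$ this is $\tau I$, whose spectral abscissa is exactly $\tau$. This $B$ commutes with $A$, has the right trace, and is Hurwitz, so Assumption~\ref{ass1_1} holds. The lower bound is therefore attained.

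\textbf{Step 2 (sparsity part).} With $\gamma_{ij}=\gamma$ we must show $\|B\|_{\ell_0}\geq n$ and $\|B\|_{\ell_1}\geq n|\tau|$ for every competitor $B^\star$ that is optimal and Hurwitz; $\tau I$ achieves both with equality.

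For $\ell_1$: $\|B\|_{\ell_1}\geq\sum_i|b_{ii}|\geq|\Tr(B)|=n|\tau|$.

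For $\ell_0$ the count is the delicate point, since a Hurwitz $B$ with fewer than $n$ nonzero entries could in principle exist. Here the stability assumption is used. If $B$ had fewer than $n$ nonzero entries, some row or column would be entirely zero. Then $B$ would have a zero eigenvalue, which contradicts $\alpha(B)<0$. Hence $\|B\|_{\ell_0}\geq n$.

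\textbf{Step 3 (combining).} Adding the two bounds, every feasible Hurwitz $B$ (for any $k$) has objective at least $\tau+\gamma n$ for $\lzeroRec$, and at least $\tau+\gamma n|\tau|$ for $\loneRec$. The pair $(k,B)=(0,\tau I)$ attains both values. So $B^\star=\tau I$ is optimal for both problems.

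The main obstacle is the $\ell_0$ count. It holds only because of the hypothesis that the optimum satisfies Assumption~\ref{ass1_1}; without it a sparser non-Hurwitz $B$ could win. We would therefore state explicitly that the optimum is sought among Hurwitz $B$. A secondary subtlety is that the spectral bound in Step 1 must not need distinct eigenvalues. It does not, since simultaneous triangularization alone gives the eigenvalue pairing, so Lemma~\ref{lemma:1} is not needed.
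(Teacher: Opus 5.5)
Your proposal is correct and follows essentially the same route as the paper. The paper also gets $\alpha(kA+(1-k)B)\geq \Tr(A)/n$ from the trace constraint, phrased as a contradiction rather than your direct max-versus-average bound, and attains it at $k=0$, $B=\frac{\Tr(A)}{n}I$. It then bounds the sparsity term via singularity of any $B$ with fewer than $n$ nonzero entries for $\ell_0$ (your zero-row pigeonhole makes this explicit), and via $\sum_i|b_{ii}|\geq|\Tr(B)|$ for $\ell_1$.
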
 
\begin{proof}
  To show the result, we first prove that $B^\star=\frac{\Tr(A)}{n}I$ is optimal for the following optimization: 
    \begin{equation}\nonumber
(\AOthr)\hspace{5mm}
\begin{aligned}
\min_{k, B} \quad & \alpha(kA+(1-k)B)\\
\textrm{s.t.} \quad & AB=BA, \\
& 0\leq k \leq 1, \\
& \Tr(B)=\Tr(A). 
\end{aligned}
\end{equation}
The optimal solution for $\AOthr$ is attained at $k^\star=0$, and $B^\star=\frac{\Tr(A)}{n}I$. To see this, first note that $B^\star\in \mathcal{C}(A)$, thus $BA = AB$. The third constraint also holds as $\Tr(B^\star)=\Tr(\frac{\Tr(A)}{n}I)=n\frac{\Tr(A)}{n}=\Tr(A)$. The minimum value of the objective function is given by $\alpha^\star(Q)=\frac{\Tr(A)}{n}$, 
which can be shown by contradiction. Suppose that $\alpha^\star(Q)<\frac{\Tr(A)}{n}$. Then, the real parts of the eigenvalues of $Q$ are smaller than $\frac{\Tr(A)}{n}$ due to the definition of spectral abscissa, which implies that $\Tr(Q)<n\frac{\Tr(A)}{n}=\Tr(A)$. However, it must be that $\Tr(Q)=\Tr(kA+(1-k)B)=k\Tr(A)+(1-k)\Tr(B)=\Tr(A)=\Tr(B)$, a contradiction. Thus, $\alpha^\star(Q)=\frac{\Tr(A)}{n}$ is optimal with $B^\star=\frac{\Tr(A)}{n}I$ and $k^\star = 0$. 

We then prove that $B^\star=\frac{\Tr(A)}{n}I$ is also optimal for both  
$\lzeroRec$ and $\loneRec$ when considering the sparsity-promoting cost part. For $\lzeroRec$, we show that ${\|\Gamma \odot B\|}_{\ell_0}$ is also minimized with $B^\star=\frac{\Tr(A)}{n}I$. First, ${\|\Gamma \odot B^\star\|}_{\ell_0}=n\gamma$ for the chosen $B^\star$. Due to the assumption that the optimal solution $B^\star$ is Hurwitz, we prove optimality by contradiction.
Suppose there exists $B$ such that $ {\|\Gamma \odot B\|}_{\ell_0}<n\gamma$. This indicates that $B$ has fewer than $n$ nonzero entries, thus $B$ is singular, which leads to a contradiction. Therefore, $B^\star=\frac{\Tr(A)}{n}I$ is optimal for both the resilience part and the sparsity-promoting part, which means that it is optimal for $\lzeroRec$.   

For $\loneRec$, we also show that the objective function $ {\|\Gamma \odot B\|}_{\ell_1}$ on sparsity is minimized with $B^\star=\frac{\Tr(A)}{n}I$. We can compute that $\|\Gamma \odot B^\star\|_{\ell_1}=-\gamma\Tr(A)$. 
According to the third constraint of $\loneRec$, we have that $ {\|\Gamma \odot B\|}_{\ell_1}\geq \gamma\sum_i |b_{ii}|\geq -\gamma\sum_i b_{ii} = -\gamma\Tr(B)=-\gamma\Tr(A)$. 
Therefore, $B^\star=\frac{\Tr(A)}{n}I$ is optimal for both the resilience part and the sparsity-promoting part of $\loneRec$, which means that it is optimal for problem $\loneRec$. Hence, we conclude that, when $\gamma_{ij}=\gamma, \forall i, j$ in $\lzeroRec$ and $\loneRec$, both problems are equivalent with a solution $B^\star=\frac{\Tr(A)}{n}I$. 
\end{proof}
Theorem~\ref{thm:2} provides a sufficient condition under which problems $\lzeroRec$ and the $\loneRec$ are equivalent. However, the optimal solution $B^\star=\frac{\Tr(A)}{n}I$ places all non-zero entries of $B$ on its diagonal due to the uniform choice of $\gamma$. In practice, we may be interested in alternative solutions guided by additional design constraints, e.g., restricting the support of $B$ to a subset $\mathbb{S}$ of edges that an operator can modify. This solution should also be harder to guess by an adversary.
Motivated by this, we study the convexity of an associated problem and explore how to select the weighting parameters to avoid trivial solutions. 

\section{Network structural optimization under heterogeneous sparsity constraints} 
Based on Lemma~\ref{lemma:1}, we can write $B$ as a linear combination $B=c_0I+c_1A+\dots+c_{n-1}A^{n-1}$ or in vectorized form as 
\begin{equation}\nonumber
    \ve(B)=c_0\ve(I)+c_1\ve(A)+\dots+c_{n-1}\ve(A^{n-1})\in \mathbb{R}^{n^2}.
\end{equation}
Denote $A_p=[\ve(I),\cdots,\ve(A^{n-1})]\in \mathbb{R}^{n^2\times n}$, where we form $A_p$ by stacking the vectorized basis matrices as columns, and $c=[c_0\cdots c_{n-1}]^T$. Then we have that 
\begin{equation}\label{eq:vec_A2} 
    \ve(B)=A_pc.
\end{equation}
Note that we vectorize $B$ in order to analyze its entries in a compact form, where each entry of $B$ is determined by the corresponding entries of the powers of $A$. Consequently, each entry of $B$ is characterized by a row of $A_p$ through the coefficient vector $c$. 
Let $\boldsymbol{\lambda}_i^T$ denote each row vector of the Vandermonde matrix $\Lambda_p$ in~\eqref{eq:vandermonde}. 
Then, the eigenvalue $\mu_i$ of $B$ can be derived as $\mu_i=\sum_{j=0}^{n-1}c_j\lambda_i^j=\boldsymbol{\lambda}_i^Tc$, while
\begin{equation}\nonumber
    \Lambda(Q)=\{k\lambda_i+(1-k)\boldsymbol{\lambda}_i^Tc\},\quad i\in \until{n}.
\end{equation}
With $\alpha(Q)$ denoting the real part of the right-most eigenvalue of $Q$ as in~\eqref{eq:metric}, $\loneRec$ is equivalent to the following reformulated problem $\rloneRec$: 


\begin{equation}\nonumber
(\rloneRec)\hspace{1mm}
\begin{aligned}
\min_{k, c} \quad & ( \max_i  \Real(k\lambda_i + (1-k)\boldsymbol{\lambda}_i^T c) 
                   + {\|W A_p c\|}_{\ell_1})\\
\textrm{s.t.} \quad 
& 0 \leq k \leq 1, \\
& \ve(I)^T A_p c = \Tr(A).
\end{aligned}
\end{equation}

From $\loneRec$ to $\rloneRec$, we replace $B$ with the vectorized form $A_pc$ according to Lemma~\ref{lemma:1}, and the coefficient vector $c$ is the new decision variable instead. Note that $W\in \mathbb{R}^{n^2\times n^2}$ is a diagonal weighting matrix that introduces the sparsity penalty for each entry of $\ve(B)$. We have that $W=\diag{\ve(\Gamma)}$ and $\|W A_pc\|_{\ell_1}=\|\ve(\Gamma\odot B)\|_{\ell_1}$. Therefore, the term $\|W A_pc\|_{\ell_1}$ is equal to the second term of the objective function in $\loneRec$. 

Next, similar to $\rROS$, we handle the $\max$ operator via a linear program, and reformulate $\rloneRec$ as the following equivalent problem $\rrloneRec$
\begin{equation} \nonumber
(\rrloneRec)\hspace{5mm} 
\begin{aligned}
\min_{k, c,x} \quad & x+ {\|W A_pc\|}_{\ell_1}\\
\textrm{s.t.} \quad 
& 0\leq k \leq 1,\\
& x\geq \Real(k\lambda_i+(1-k)\boldsymbol{\lambda}_i^Tc),\\
& \ve(I)^TA_pc=\Tr(A).\\
\end{aligned}
\end{equation}
The following result establishes the convexity of the reformulated optimization problems $\rloneRec, \rrloneRec$. 

\begin{lemma}[Convexity of the relaxed problem]\label{thm:3}
     $\rloneRec, \rrloneRec$ are biconvex, i.e., $\rloneRec, \rrloneRec$ are convex when fixing $k$ or fixing $c$. 
\end{lemma}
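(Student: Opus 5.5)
We show biconvexity by checking the two partial problems separately, using only that the composition of a convex function with an affine map is convex, that pointwise maxima of convex functions are convex, and that sums of convex functions are convex. Throughout, the feasible set is handled first. For $\rloneRec$ the constraints are $0\leq k\leq 1$ and $\ve(I)^TA_pc=\Tr(A)$, i.e.\ the product of an interval in $k$ with an affine subspace in $c$. This set is convex, and so is each of its slices. For $\rrloneRec$ we add the constraints $x\geq \Real(k\lambda_i+(1-k)\boldsymbol{\lambda}_i^Tc)$. Hence we must also check that these define convex sets in $(c,x)$ for fixed $k$, and in $(k,x)$ for fixed $c$.

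\emph{Fixing $k$.} For fixed $k\in[0,1]$ the map $c\mapsto k\lambda_i+(1-k)\boldsymbol{\lambda}_i^Tc$ is complex-affine in $c\in\mathbb{R}^n$. Hence $c\mapsto \Real(k\lambda_i+(1-k)\boldsymbol{\lambda}_i^Tc)=k\Real\lambda_i+(1-k)\Real(\boldsymbol{\lambda}_i)^Tc$ is real-affine. The maximum over $i$ of these $n$ affine functions is convex. The term $\|WA_pc\|_{\ell_1}$ is a norm composed with the linear map $c\mapsto WA_pc$, so it is also convex. Their sum, the objective of $\rloneRec$, is therefore convex in $c$ over an affine feasible set. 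For $\rrloneRec$, each constraint $x-\Real(\cdot)\geq 0$ is affine in $(c,x)$ and defines a half-space. The objective $x+\|WA_pc\|_{\ell_1}$ is convex in $(c,x)$, so the problem is convex.

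\emph{Fixing $c$.} For fixed $c$ the sparsity term is a constant. The map $k\mapsto \Real(k\lambda_i+(1-k)\boldsymbol{\lambda}_i^Tc)=\Real(\boldsymbol{\lambda}_i^Tc)+k(\Real\lambda_i-\Real\boldsymbol{\lambda}_i^Tc)$ is affine in $k$. The pointwise maximum of these maps is convex, and the domain $[0,1]$ is convex, so $\rloneRec$ is convex in $k$; in fact it has the same structure as $\rROS$, with $\mu_i=\boldsymbol{\lambda}_i^Tc$. For $\rrloneRec$ the constraints are half-spaces in $(k,x)$ and the objective is linear, giving a linear program. The equality constraint on $c$ does not involve $k$ and only restricts the fixed $c$ to be feasible.

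The only potentially subtle point is that $\lambda_i$ and $\boldsymbol{\lambda}_i$ are complex. We therefore verify explicitly that taking $\Real$ of a complex-affine expression in real variables gives a real-affine function: since $k$ and $c$ are real, $\Real(zc_j)=(\Real z)c_j$. We should also remark that the joint problem is not claimed to be convex, because the bilinear terms $k\,\boldsymbol{\lambda}_i^Tc$ break joint convexity. This bilinearity is exactly why the result is stated as biconvexity and why a McCormick relaxation or alternating minimization is used later. Apart from this, the argument is routine.
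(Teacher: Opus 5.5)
Your proof is correct and follows the same route as the paper. In each block, the objective is a pointwise maximum of functions affine in the free variable, plus the convex $\ell_1$ term, and joint convexity fails because of the bilinear $k\,\boldsymbol{\lambda}_i^Tc$ terms. You make two steps precise that the paper only asserts: the identity $\Real(z c_j)=(\Real z)c_j$ for real $c_j$ justifies its ``w.l.o.g.\ $\lambda_i$ real'', and your half-space argument shows that the epigraph constraints of $\rrloneRec$ are linear only blockwise, not ``all linear'' as the paper says.
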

\begin{proof}
The convexity of $\rloneRec, \rrloneRec$ is determined by that of the function $f(k,c)=\max_i \Real(k\lambda_i+(1-k)\boldsymbol{\lambda}_i^Tc)$ since all the constraints are linear. W.l.o.g., assume $\lambda_i\in \mathbb{R}, \forall i$. 
Observe that, when $c$ is fixed, $f(k,c)$ is the composition of two convex functions, thus it is convex. Similarly, 
 $f(k, c)$ is also the composition of two convex functions in $c$, when $k$ is fixed. 
However, $f(k,c)$ is not convex in the pair $(k,c)$, as it involves the bilinear product $kc$. 
\end{proof}

Motivated by the above, we propose two methods to solve the previous problem: one relies on an alternating minimization method, 
and the other on a McCormick relaxation. 

\subsubsection{Alternating minimization} For commutative $A,B\in\mathbb{R}^{n\times n}$, we have $A=P^{-1}U_A P, B=P^{-1}U_B P$ according to~\eqref{eq:upptri}. Therefore, $ U_B = P B P^{-1}$ and we obtain
$\mu_i := (U_B)_{ii} =e_i^\top U_B e_i= e_i^\top (P B P^{-1}) e_i = \Tr( e_i^\top P B P^{-1}e_i)
= \Tr( P^{-1} e_i e_i^\top P B )=\langle E_i^\top,\,B\rangle_F,$ where $e_i\in\mathbb{R}^n$ is the $i$-th standard basis vector, and $E_i=P^{-1} e_i e_i^\top P$. Or equivalently, $\mu_i= (e_i^\top P) B (P^{-1} e_i)
= u_i^\top B v_i$ with $u_i^\top:=e_i^\top P, v_i:=P^{-1} e_i.$ Then, the optimization problem using alternating minimization can be reformulated as follows:

\begin{equation}\nonumber
(\amRec)\hspace{5mm}
\begin{aligned}
\min_{k,x, B} \quad & x+ {\|\Gamma \odot B\|}_{\ell_1}\\
\textrm{s.t.} \quad & AB=BA, \\
& 0\leq k \leq 1, \\
&x\geq k\Real\lambda_i+(1-k) \Real(\langle E_i^\top,\,B\rangle_F), \\
& \Tr(B)=\Tr(A). 
\end{aligned}
\end{equation}
In order to solve $\amRec$, we generate several random initializations of $k \in [0,1]$. For any fixed $k$, the subproblem in $(B,x)$ is convex. Similarly, for any fixed $B$, the subproblem in $(k,x)$ is also convex and can be solved via linear programming. We alternate between these two steps until convergence, i.e., when the relative decrease in the objective falls below a specified tolerance or the maximum number of iterations is reached.  Since each block is solved exactly and each subproblem is convex, the objective function is nonincreasing across iterations and the procedure converges to a coordinate-wise local minimum (stationary point). Among multiple random initializations, we retain the solution with the best objective value. 

\subsubsection{McCormick relaxation}
A McCormick relaxation relaxes $\rrloneRec$ as the following convex optimization; namely, $\cvxRec$, where we introduce a bound $c_i \in [-a, a]$ for each $c_i, i\in \untilzn{n-1}$ with a constant $a>0$. In addition, $\lambda_{A}=[\lambda_1,\cdots,\lambda_n]^T$ and $\mathbf{1}$ denotes the vector of all ones.

  \begin{equation} \nonumber
(\cvxRec)\hspace{5mm}
\begin{aligned}
\min_{k, c,x,w} \quad & x+{\|W A_pc\|}_{\ell_1}\\
\textrm{s.t.} \quad 
& 0\leq k \leq 1,\\
& -a\mathbf{1}\leq c \leq a\mathbf{1},\\
& -ak\mathbf{1}\leq w \leq ak\mathbf{1},\\
& c+(k-1)a\mathbf{1}\leq w \leq c+(1-k)a\mathbf{1},\\
& \mathbf{1}x\geq \Real(\Lambda_p(c-w)+k\lambda_{A}),\\
& \ve(I)^TA_pc=\Tr(A).\\
\end{aligned}
\end{equation}
Clearly, $\cvxRec$ is a convex problem, which can be efficiently solved. Although $\cvxRec$ is a relaxation of $\rrloneRec$ that provides a lower bound of $k$ for $\rrloneRec$ due to McCormick relaxation,  we are still able to find a good sparse $B$ by choosing an appropriate weighting matrix $\Gamma$ or $W$. A McCormick relaxation can be preferable to other approaches due to its efficient implementation, which builds on previous problem structure. 
This relaxation mainly affects the solution of $k$, but we can run Algorithm 1 again to derive the optimal $k$ once the sparse network $B$ is found. We later summarize this procedure and approach for solving Problem~2 in Algorithm~\ref{algorithm3}. 

As we previously discussed, the choice of $\Gamma$ can significantly affect the solution of the network optimization problem. This is shown in Theorem~\ref{thm:1} for problem $\lzeroRec$, where $B^\star=\frac{\Tr(A)}{n}I$.
However,  $B=\frac{\Tr(A)}{n}I$ may be deemed to be an ``easy guess'' by an adversary. Motivated by this, we study here alternative designs for the McCormick relaxation above and heterogeneous $\Gamma$. 
To do this, we start by defining what a desired sparsity patterns is, to provide conditions that ensure the existence of a compatible commutative network.  

\begin{definition} [Sparsity pattern and compatibility]\label{defn:1}
{\rm Consider a network with $n$ nodes, and associated dynamics given by $A\in \mathbb{R}^{n \times n}$. Let $\mathbb{S}\subseteq \{(i,j)\mid 1\leq i, j \leq n\}$ be a set of edges. We say that network $B$ follows the desired sparsity pattern $\mathbb{S}$ if $b_{ij}=0$, $\forall (i,j)\in \mathbb{S}$. Furthermore, we say that $B$ is compatible with $A$ if there exists a nontrivial coefficient vector $c\neq 0$ such that $B \in \mathcal{C}(A)$ under the sparsity pattern $\mathbb{S}$. }\oprocend 
\end{definition}


We now investigate conditions on $\mathbb{S}$ that can ensure that there exists a compatible network $B$ with respect to a given  $A$. According to~\eqref{eq:vec_A2}, it holds that $\ve(B)=A_pc$, where $c$ is the coefficient vector for $B\in \mathcal{C}(A)$. 
Given $\mathbb{S}$,  define the index set $\mathcal{I}_0\subseteq \until{n^2}$ as $\mathcal{I}_0= \{h\mid h=(j-1)n+i, (i,j)\in \mathbb{S}\}$. That is, for a $B$ that follows the sparsity pattern of $\mathbb{S}$, the set $\mathcal{I}_0$ are the indices where the corresponding entries of $\ve(B)$ are zero. Using $\mathcal{I}_0$, define $A_{(r_0)}\in \mathbb{R}^{|\mathbb{S}|\times n}$,
which consists of the rows of $A_p$ 
indexed by $\mathcal{I}_0$.  The following result establishes a connection between the sparsity pattern $\mathbb{S}$, $A_{(r_0)}$, and the existence of compatible network $B$.
\begin{proposition}[Existence of a compatible network]\label{prop:compatibility}

Consider a network $A$ with $n$ nodes, and the desired sparsity pattern $\mathbb{S}$ with $|\mathbb{S}|\geq n$ for $B$. Then, there exists a network $B$ that is compatible with $A$ under sparsity pattern $\mathbb{S}$ if and only if $A_{(r_0)}$ is rank deficient. 
\end{proposition}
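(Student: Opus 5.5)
The plan is to reduce compatibility to a homogeneous linear system in the coefficient vector $c$, and then use the rank--nullity theorem. Throughout I work under the standing hypothesis of Lemma~\ref{lemma:1}, namely that $A$ has distinct eigenvalues. Then every $B\in\mathcal{C}(A)$ can be written as $B=\sum_{i=0}^{n-1}c_iA^i$ with $c\in\mathbb{R}^n$, or equivalently $\ve(B)=A_pc$ as in~\eqref{eq:vec_A2}.

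First I record that the map $c\mapsto A_pc$ is injective. The matrices $I,A,\dots,A^{n-1}$ are linearly independent: the minimal polynomial of $A$ has degree $n$ because its eigenvalues are distinct. This fact was already used at the end of the proof of Lemma~\ref{lemma:1}. Hence the columns of $A_p$ are linearly independent, and $\rank(A_p)=n$. In particular, $c\neq 0$ holds if and only if $B\neq 0$, so the nontriviality requirement in Definition~\ref{defn:1} is the same as asking for a nonzero $B$.

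Next I translate the sparsity pattern. By construction of $\mathcal{I}_0$, the condition $b_{ij}=0$ for all $(i,j)\in\mathbb{S}$ says exactly that the entries of $\ve(B)$ indexed by $\mathcal{I}_0$ vanish. Since those entries are obtained from the rows of $A_p$ with indices in $\mathcal{I}_0$, this is the system $A_{(r_0)}c=0$. Combining with Lemma~\ref{lemma:1}, a network $B$ compatible with $A$ under $\mathbb{S}$ exists if and only if there is $c\in\mathbb{R}^n\setminus\{0\}$ with $A_{(r_0)}c=0$, that is, if and only if $\nullity(A_{(r_0)})\geq 1$.

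Finally I apply rank--nullity: $\rank(A_{(r_0)})+\nullity(A_{(r_0)})=n$, where $n$ is the number of columns. So a nontrivial kernel exists if and only if $\rank(A_{(r_0)})<n$. Because $|\mathbb{S}|\geq n$, the matrix $A_{(r_0)}\in\mathbb{R}^{|\mathbb{S}|\times n}$ has at least as many rows as columns, so full rank means rank $n$. Therefore $\rank(A_{(r_0)})<n$ is precisely the statement that $A_{(r_0)}$ is rank deficient, which proves both directions.

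The steps are routine once they are set up this way. The point requiring care is making sure the kernel vector is real. $A_{(r_0)}$ is a real matrix, so its real kernel is nontrivial whenever its rank is less than $n$, and a real $c$ produces a real $B$. The one remaining subtlety is the interpretation of $|\mathbb{S}|\geq n$. It is needed only so that ``rank deficient'' means rank $<n$ rather than rank $<|\mathbb{S}|$; for $|\mathbb{S}|<n$ a nontrivial solution would always exist.
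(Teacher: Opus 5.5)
Your proof is correct and follows the paper's argument: you rewrite the sparsity constraint as $A_{(r_0)}c=0$ and apply rank--nullity, with $|\mathbb{S}|\geq n$ ensuring that rank deficiency means $\rank(A_{(r_0)})<n$. The paper's proof does not make your extra observations (injectivity of $c\mapsto A_pc$ under distinct eigenvalues, so that $c\neq 0$ yields a nonzero $B$, and that the kernel vector can be taken real), but they only sharpen the argument and do not change it.
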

\begin{proof}
The existence of $B$ that is compatible with $A$ depends on whether we find a nontrivial coefficient vector $c\neq 0$ that results into $B \in \mathcal{C}(A)$ under the sparsity pattern $\mathbb{S}$. According to Definition \ref{defn:1}, it requires that $b_{ij} = 0$, $\forall (i,j) \in \mathbb{S}$, which specifies that the corresponding entries of $\ve(B)$ must be zero by imposing the condition $A_{(r_0)}c = 0$. Therefore, this is equivalent to finding  conditions on $A_{(r_0)}$ such that the equation $A_{(r_0)}c=0$ has non-trivial solutions; i.e., $c\neq 0$. (Necessity:) If a compatible $B$ exists, then $A_{(r_0)}c = 0$ for a nontrivial $c$. Based on rank–nullity theorem, we have that $\rank(A_{(r_0)})+\nullity(A_{(r_0)})=n$. Since $A_{(r_0)}\in \mathbb{R}^{|\mathbb{S}|\times n} $ with $|\mathbb{S}|\geq n$, and $\nullity(A_{(r_0)})\geq 1$ for a non-trivial $c$, we have that $\rank(A_{(r_0)})\leq n-1$, which leads to that $A_{(r_0)}$ is rank deficient. (Sufficiency:) If  $A_{(r_0)}$ is rank deficient, then $\rank(A_{(r_0)})<n$. Therefore, $\nullity(A_{(r_0)})\geq 1$ and we know that a nontrivial $c$ exists such that $A_{(r_0)}c = 0$, which implies that $b_{ij} = 0$, $\forall (i,j) \in \mathbb{S}$. Furthermore, by construction using the non-trivial coefficient vector $c$, we obtain that $\ve(B)=A_pc$ with $B\in \mathcal{C}(A)$. Therefore, we conclude that there exists a network $B$ that is compatible with $A$ under sparsity pattern $\mathbb{S}$, which completes the proof. 
\end{proof}

Given an initially sparse network $A$, a designer would require a counterpart $B$ that is sparse as well. Thus, in what follows we focus on the case $|\mathbb{S}| \geq n$ in Proposition~\ref{prop:compatibility}, meaning that $B$ should have at least $n$ zero entries to ensure maximal sparsity. In this scenario, $A_{(r_0)}$ is either a square or a tall matrix. 
To obtain a sparse and effective $B$, a natural approach is to select as many rows of $A_p$ as possible, i.e., maximize $|\mathcal{I}_0|$ while ensuring that $A_{(r_0)}$ remains rank-deficient. 
The problem of selecting the maximal number of rows from $A_p$ to form a rank-deficient matrix $A_{(r_0)}$ is in general NP-hard, due to the combinatorial nature of row selection. In the following, we propose an iterative algorithm 
as shown in Algorithm~\ref{algorithm2} to find a sparsity pattern $\mathbb{S}$ such that $|\mathbb{S}|$ is as large as possible, given that there exists a network $B$ compatible with $A$. 

\begin{algorithm}[!ht]
\PrintSemicolon
  \KwInput{$A$ with $n$ nodes}
  \KwOutput{$\mathbb{S}$}

  Compute the basis for the centralizer of $A$ as $I, A, A^2,\cdots, A^{n-1}$\\
  Vectorize $I, A, A^2,\cdots, A^{n-1}$ and derive $A_p=[\ve(I)\cdots\ve(A^{n-1})]$\\
  Initialize: set $i = 0, \mathcal{I}_i = \mathcal{I}_0 \subseteq \until{n^2} $ with $|\mathcal{I}_0|=n$.\\
  Compute $A_{(r_0)}^{(0)}$ from rows of $A_p$ indexed by $\mathcal{I}_0$\\
  \textbf{while} ($A_{(r_0)}^{(0)}$ is full rank) \textbf{do} \\
  \hspace{5mm} Update $\mathcal{I}_0$ by changing the index\\
  \hspace{5mm} Compute $A_{(r_0)}^{(0)}$ from updated $\mathcal{I}_0$\\
  \textbf{end while} \\
  \textbf{while} ($A_{(r_0)}^{(i)}$ is rank deficient) \textbf{do} \\
  \hspace{5mm} Increment $i$\\
  \hspace{5mm} \textbf{repeat}\\  
  \hspace{5mm} Select row index $j$ not in $\mathcal{I}_i$\\
  \hspace{5mm} Update $\mathcal{I}_{i+1} = \mathcal{I}_i \cup \{j\}$\\
  \hspace{5mm} Compute $A_{(r_0)}^{(i+1)}$ based on index set $\mathcal{I}_{i+1}$\\
  \hspace{5mm} \textbf{if} ($A_{(r_0)}^{(i+1)}$ is rank deficient) \textbf{then}\\
  \hspace{10mm} Update $\mathcal{I}_i = \mathcal{I}_{i+1}$\\
  \hspace{10mm} \textbf{break}\\
  \hspace{5mm} \textbf{until} ($A_{(r_0)}^{(i+1)}$ is rank deficient) \\  
  \hspace{5mm} \textbf{if} ($A_{(r_0)}^{(i+1)}$ is full rank) \textbf{then} \\
  \hspace{10mm} \textbf{break}\\
  \textbf{end while} \\
  Derive $\mathbb{S}$ based on the index set $\mathcal{I}_i$\\
  \Return  $\mathbb{S}$
\caption{\textbf{SCNet} --- \textbf{S}parsity Pattern Construction for \textbf{C}ompatible \textbf{Net}works}
\label{algorithm2}
\end{algorithm}

In Algorithm~\ref{algorithm2}, we initialize $A_{(r_0)}^{(0)}$ as a square matrix, where $|\mathbb{S}| = n$ and $n$ entries of the network $B$ are required to be zero. We then iteratively add rows to $A_{(r_0)}^{(0)}$ while ensuring that $A_{(r_0)}^{(i)}$ remains rank deficient. This approach allows the resulting network $B$ to be as sparse as possible.

The construction of $\mathbb{S}$, such that there exists a network $B$ compatible with $A$, is important because it provides guidance on how to select the weighting parameters $\gamma_{ij}$ in $\Gamma$, enabling the optimization problem $\cvxRec$ to yield a sparse network $B$. Specifically, let $\underline{\gamma}$ and $\overline{\gamma}$ be positive weighting parameters, where $\underline{\gamma}$ is significantly smaller than $\overline{\gamma}$, i.e., $\underline{\gamma} \ll \overline{\gamma}$. Based on the resulting sparsity pattern $\mathbb{S}$ obtained from Algorithm~\ref{algorithm2}, we set $\gamma_{ij} = \overline{\gamma}$ if $(i,j) \in \mathbb{S}$, and $\gamma_{ij} = \underline{\gamma}$ if $(i,j) \notin \mathbb{S}$.

With a method for selecting $\Gamma$, we are now prepared to propose an approach using the McCormick relaxation to solve Problem~$2$, as summarized in Algorithm~\ref{algorithm3}. The procedure begins by constructing the basis for the centralizer of $A$ and the Vandermonde matrix $\Lambda_p$ as in~\eqref{eq:vandermonde}. Next, Algorithm~\ref{algorithm2} is run to find $\mathbb{S}$, which is then employed to select the weighting parameters. The matrix $B$ is then obtained solving $\cvxRec$. Finally, Algorithm~\ref{algorithm1} is run to determine $k^\star$ and $\alpha^\star(Q)$.


\begin{algorithm}[!ht]
\PrintSemicolon
  \KwInput{$A$ with $n$ nodes, positive constants $a, \underline{\gamma}, \overline{\gamma}$}
  \KwOutput{$B$ s.t. $AB=BA$, $k^\star,\alpha^\star(Q)$}
  Compute the basis for the centralizer of $A$ as $I, A, A^2,\cdots, A^{n-1}$\\
  Vectorize $I, A, A^2,\cdots, A^{n-1}$ and derive $A_p=[\ve(I)\cdots\ve(A^{n-1})]$\\ 
  Compute $\Tr(A)$ and the eigenvalues of $A$ as $\lambda_i, i\in \until{n}$\\
  Derive $\lambda_{A}=[\lambda_1,\cdots,\lambda_n]^T$ and the Vandermonde matrix $\Lambda_p$ as in~\eqref{eq:vandermonde}\\
  Run Algorithm~\ref{algorithm2} for the desired sparsity pattern $\mathbb{S}$ \\
  Select the weighting matrix $\Gamma=[\gamma_{ij}]$ with $\gamma_{ij} = \overline{\gamma}$ if $(i,j) \in \mathbb{S}$, and $\gamma_{ij} = \underline{\gamma}$ if $(i,j) \notin \mathbb{S}$\\
  Compute the diagonal weighting matrix $W=\diag{\ve(\Gamma)}$\\
  Solve the convex optimization $\cvxRec$ to obtain $B$\\
  Run Algorithm~\ref{algorithm1} for the accurate $k^\star,\alpha^\star(Q)$ \\
  \Return  $k^\star,\alpha^\star(Q), B$
\caption{\textbf{SPNOpt} --- \textbf{S}parsity-\textbf{P}romoting \textbf{N}etwork \textbf{Opt}imization}
\label{algorithm3}
\end{algorithm}
\section{Simulations and examples}\label{sec:simulations}
Here, we present simulations on the scenarios discussed in Section~\ref{sec:application} to validate our previous results. We begin with a small-scale network and a formation control problem, followed by larger-scale numerical and power grid network examples. 

\subsubsection{Resilient formation control}\label{sec:formation-control-example}
We consider a team of five agents in $\mathbb{R}^2$ maintaining a pentagonal formation as they traverse a confined barrier region. We adopt an initial 5-node topology presented in~\cite{liu2022iterative} as a toy illustration. The graph corresponding to this topology $A$ is shown in Fig.~\ref{P1_exmp1_graph} with 
\begin{equation}\nonumber
    A = \begin{bmatrix}
0&1&0&0&0\\
0&0&1&0&0\\
0&0&0&1&0\\
0&0&0&0&1\\
-150&-260&-187&-69&-13
\end{bmatrix}.
\end{equation}


This matrix is given as $A = -(L_w+K)$, where $L_w$ is the weighted Laplacian and $K$ is a gain matrix that modifies the entries of $L_w$. The matrix is then employed as in Section~\ref{sec:application} in a Kronecker product to obtain the corresponding 2D vehicle formation system. The formation is driven by a time‑varying reference signal $g(t)$ that smoothly contracts the pentagon to fit through the narrow barrier region and then re‑expands it. Here, note that all $5$ agents are leaders that have the access to this signal.  The closed-form expression of $g(t)$ is given as follows:
Let $\theta_i=2\pi(i-1)/5, i\in \until{5}$. The total maneuver time  $T=40$ is divided into two phases of durations $T_1=17.144$ and $T_2=T-T_1=22.856$. For each agent $i$, the reference $g_i(t)$ is
\begin{equation}\nonumber
     g_i(t)= 
\begin{cases}
    \left(\begin{smallmatrix}
        t+(1-\phi_1(t))r_w\cos\theta_i+\phi_1(t)r_n\cos\theta_i\\
        (1-\phi_1(t))r_w\sin\theta_i+\phi_1(t)r_n\sin\theta_i
    \end{smallmatrix}\right),& 0\leq t\leq T_1,\\
    \left(\begin{smallmatrix}
        t+(1-\phi_2(\tau))r_n\cos\theta_i+\phi_2(\tau)r_w\cos\theta_i\\
        (1-\phi_2(\tau))r_n\sin\theta_i+\phi_2(\tau)r_w\sin\theta_i
    \end{smallmatrix}\right),& T_1\leq t\leq T,
\end{cases}
\end{equation}
where $\tau=t-T_1, r_n=2, r_w=5$ and $\phi_1=0.5(1-\cos(\pi t/T_1))$, $\phi_2=0.5(1-\cos(\pi (t-T_1)/T_2))$. Stacking these $2$‑dimensional vectors gives the full $g(t)\in \mathbb{R}^{10}$. 
The barrier may represent a region where an adversary is present, and the goal is to find a complementary topology $B$ and an optimal topological switching that can be used along entire path of the formation to increase its resilience. 

The eigenvalues of $A$ are $-2\pm i, 3, -3\pm i$, thus $\alpha(A)=-2$. We run Algorithm~\ref{algorithm3} to obtain a desired commutative network $B$ to enhance resilience by switching between $A$ and $B$. As an intermediate step of Algorithm~\ref{algorithm3}, we execute Algorithm~\ref{algorithm2} to obtain a sparsity pattern $\mathbb{S}$ such that a compatible network $B$ exists. To do this, we initialize $A_{(r_0)}^{(0)}$  according to the sparsity pattern $\mathbb{S}_0=\{(2,4),(2,2),(3,3),(4,4),(5,5)\}$, and $\text{rank}(A_{(r_0)}^{(0)})=4<5$. The resulting sparsity pattern $\mathbb{S}$, obtained via Algorithm~\ref{algorithm2}, is
\begin{multline*}
\mathbb{S}=\{(2,2),(2,3),(2,4),(2,5),(3,1),(3,3),(3,4),(3,5), \\
(4,1),(4,2),(4,4),(4,5),(5,1),(5,2),(5,3),(5,5)\},
\end{multline*}
with $|\mathbb{S}|=16$ and the associated $\text{rank}(A_{(r_0)})=4$. Based on this, we choose a weighting matrix $\Gamma=[\gamma_{ij}]$, where $\gamma_{ij} = \overline{\gamma}=100$ if $(i,j) \in \mathbb{S}$, and $\gamma_{ij} = \underline{\gamma}=1$ if $(i,j) \notin \mathbb{S}$. Next, via Algorithm~\ref{algorithm3} we solve the optimization problem  $\cvxRec$ to obtain the desired network $B$.

The resulting graph is shown as $B$ in Fig.~\ref{P1_exmp1_graph} with
\begin{equation}\nonumber
    B = \begin{bmatrix}
-13&-9.35&-3.45&-0.65&-0.05\\
7.5&0&0&0&0\\
0&7.5&0&0&0\\
0&0&7.5&0&0\\
0&0&0&7.5&0
\end{bmatrix},
\end{equation}
and $\alpha(B) = -2.25$. Since the McCormick relaxation 
affects the solution of $k$, we apply Algorithm 1 to derive the optimal $k^\star$ and the resilience $\alpha^\star(Q)$ of the switched system under the optimal ratio $k^\star$. The resulting values are $k^\star = 0.4286$ and $\alpha^\star(Q) = -2.5714 < \min(\alpha(A), \alpha(B))$. These results indicate that, for this multi‑agent formation control scenario traversing a narrow barrier region, maintaining topology $A$ for a ratio $k^\star$ of each period and topology $B$ for the complementary ratio $1 - k^\star$ effectively enhances the system’s resilience via network switching. Fig.~\ref{formation} shows the agents’ trajectories over a $40 \mathrm{s}$ period, during which we switch between topologies $A$ and $B$ twice: topology $A$ is held for $k^\star$ of the period (i.e. $17.144 \mathrm{s}$) and topology $B$ for $1 - k^\star$ (i.e. $22.856 \mathrm{s}$). We also plot the resilience of the switched system between $A$ and $B$ for different values of the ratio $k$, as shown in Fig.~\ref{resi_switch}, which shows that $\alpha(Q)$ is optimal at $k^\star$.  

\begin{figure}[!h]
\centering
\includegraphics[width=\columnwidth]{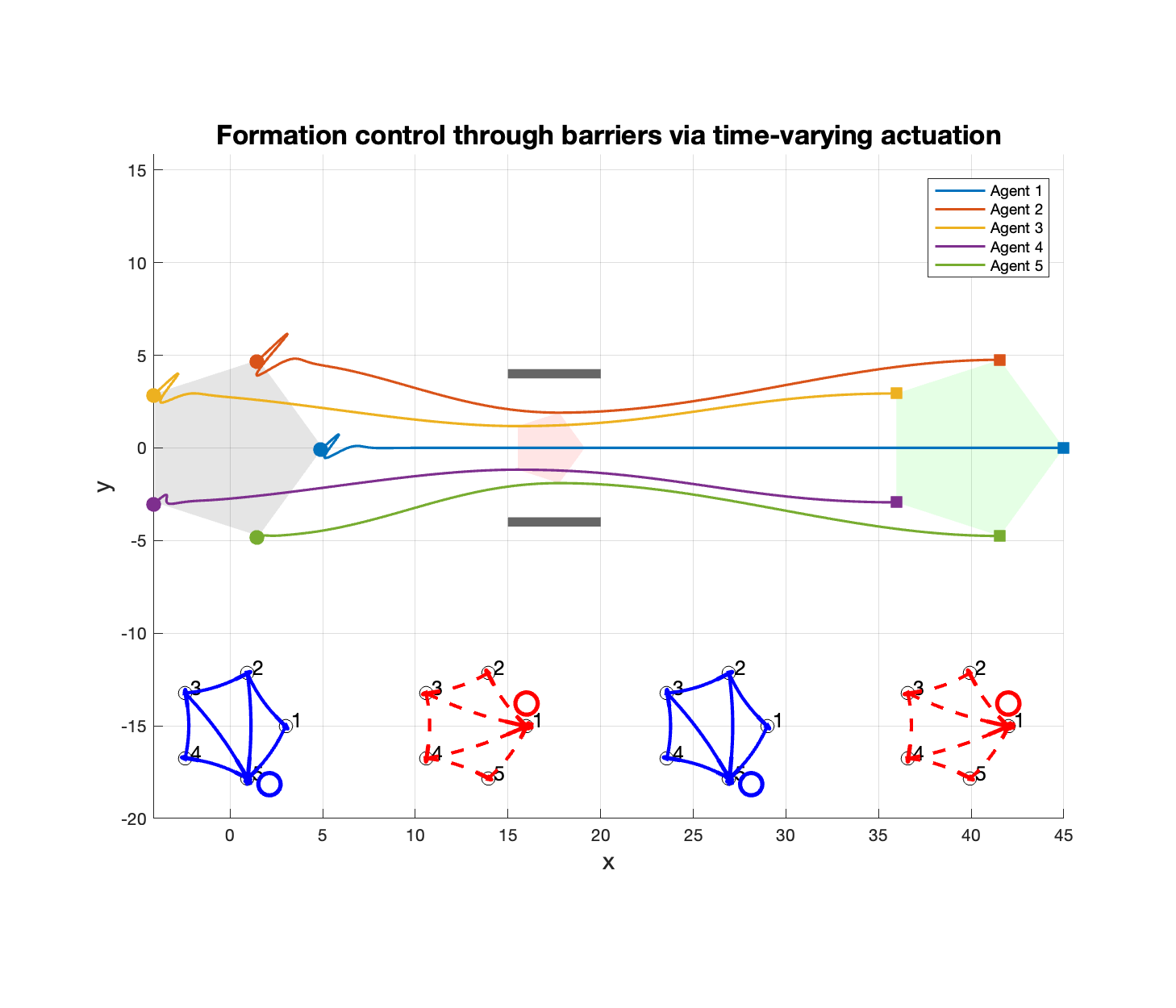}
\caption{Formation control to pass a narrow barrier region via time-varying actuation.}
\label{formation}
\end{figure}

\begin{figure}[!h]
\centering
\includegraphics[width=0.8\columnwidth]{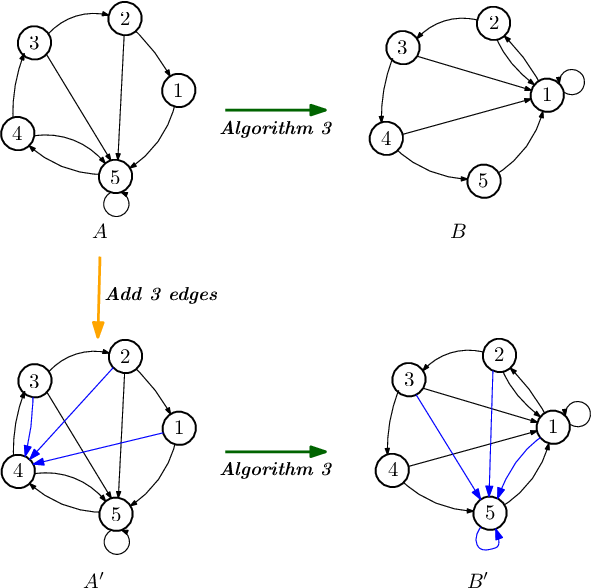}
\caption{The given network topologies (left) and the commutative networks obtained (right) via Algorithm~\ref{algorithm3}.}
\label{P1_exmp1_graph}
\end{figure}

We then add $3$ edges to node $4$ with weight $a_{41}=a_{42}=a_{43}=1$. This modifies the network to $A^\prime$ as shown in Fig.~\ref{P1_exmp1_graph}. We apply Algorithm~\ref{algorithm3} to derive the corresponding complementary network, ensuring that resilience is improved through the time-varying actuation strategy. 
The topology of the updated network is represented as $B^\prime$, where the newly added edges, compared to $B$, include four edges, one of which is a self-loop at node $5$. We compute $\alpha(A^\prime) = -1.1542 $ and $\alpha(B^\prime) = -1.3217$. The optimal switching ratio $k^\star$ and the resilience $\alpha^\star(Q^\prime)$ of the switching system under the optimal ratio $k^\star$ are derived as $k^\star = 0.1809$ and $\alpha^\star(Q^\prime) = -1.91338$, which is less than $\min(\alpha(A^\prime), \alpha(B^\prime))$.

\begin{figure}[!h]
\centering
\includegraphics[width=0.8\columnwidth]{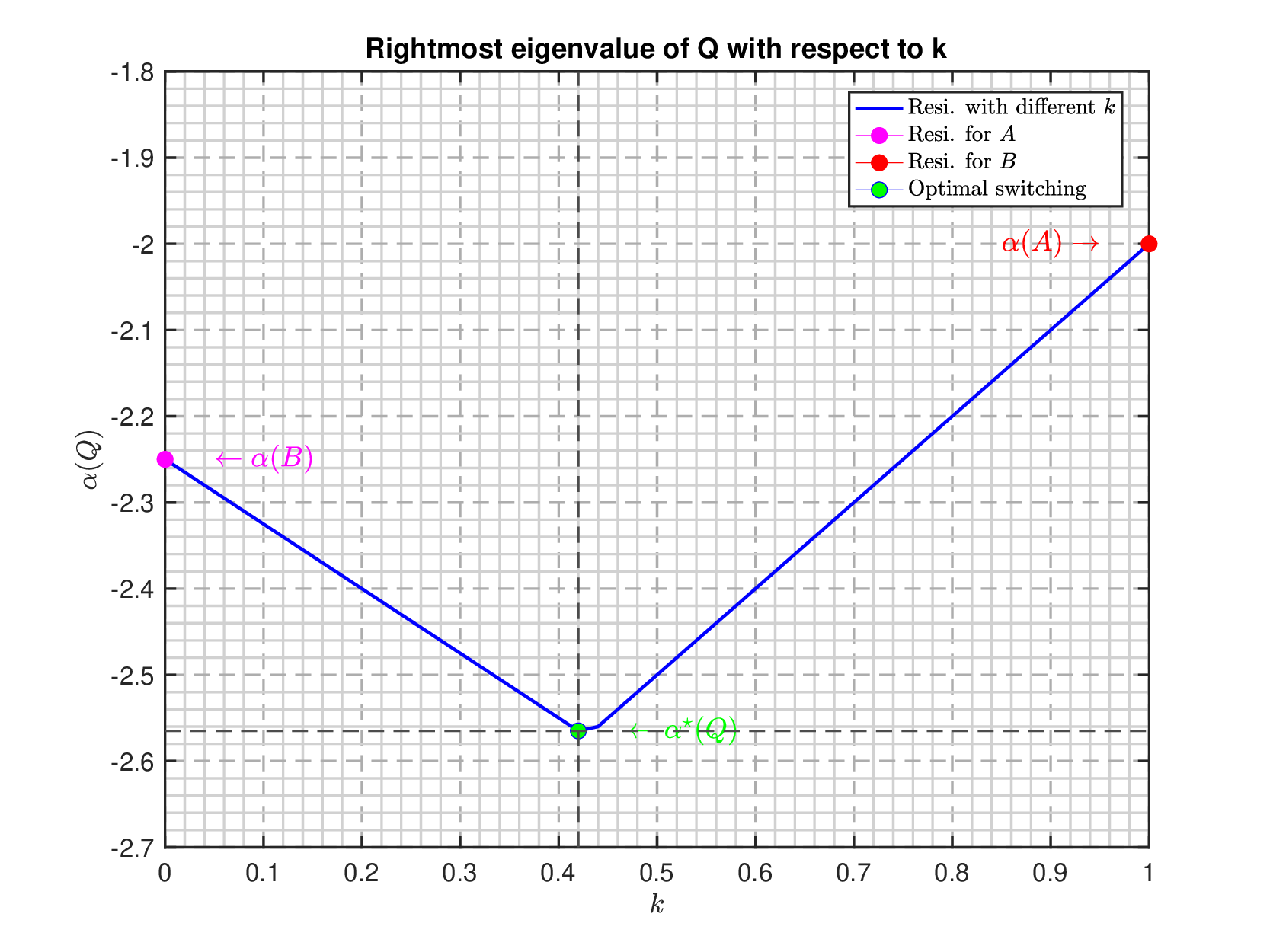}
\caption{Resilience of the switched system for different~$k$.}
\label{resi_switch}
\end{figure}

\subsubsection{High-dimensional numerical example}
We present a numerical simulation of a $100$-node network  to test our algorithms over larger networks. 
The dynamics are represented by a sparse matrix $A\in \mathbb{R}^{100\times 100}$ 
whose sparsity pattern is shown in Fig.~\ref{power_grid}.
As $\alpha(A) = -0.6091$, our goal is to derive a complementary network, $B$, which can improve the resilience of the switched system. In high-dimensional cases such as this, computing the powers of $A^i$ for large $i$ becomes impractical. This problem can be avoided by focusing on a subset of the centralizer of $A$ to construct the commutative network $B$. Another option is to resort to the alternating minimization method, which does not involve matrix powers. 
\begin{figure}[!h]
\centering
\includegraphics[width=\columnwidth, clip, trim=5.6cm 0.5cm 4.4cm 0.2cm]{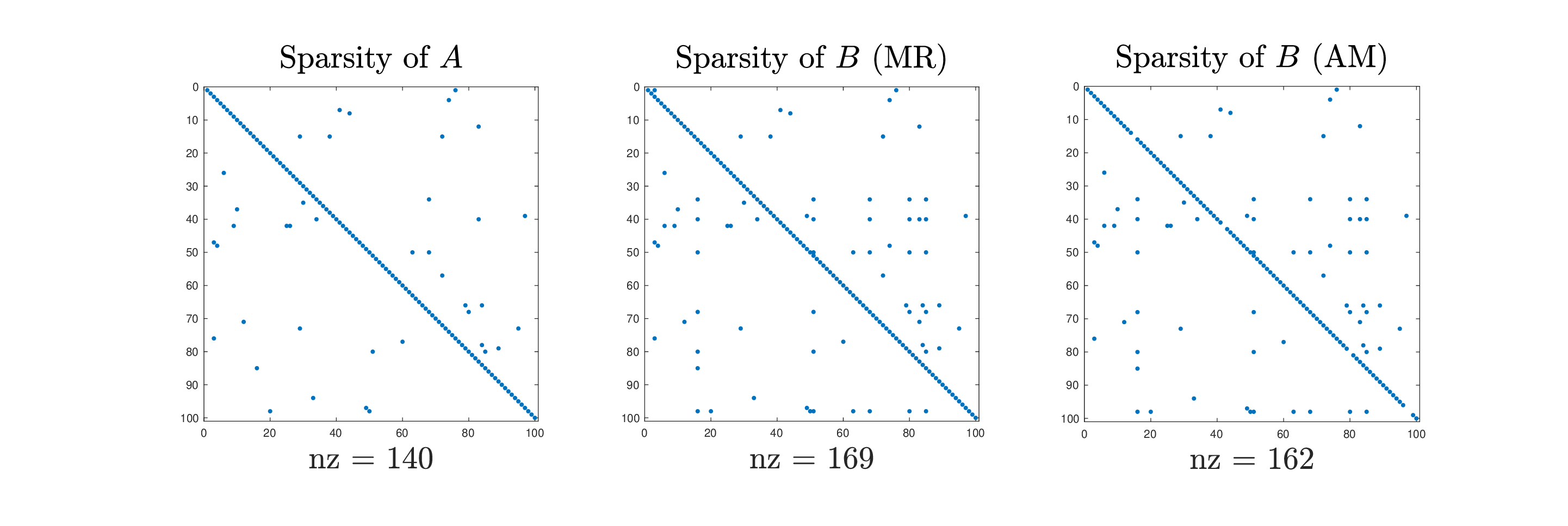}
\caption{Sparsity patterns of the given network $A$ (left, with 140 nonzero entries), the resilience-enhancing complementary network $B$ using McCormick relaxation (middle, with 169 nonzero entries), and the resilience-enhancing complementary network $B$ using alternating minimization (right, with 162 nonzero entries).}
\label{power_grid}
\end{figure}

The given network example has $140$ nonzero entries, thus we explore these options.  Specifically, for the McCormick relaxation method, we first compute powers of $A$ up to order $12$ to construct $B$ via Algorithm~\ref{algorithm3}. 
%
%
The sparsity pattern of $B$ derived using the McCormick relaxation through Algorithm~\ref{algorithm3} is shown in the middle of Fig.~\ref{power_grid}. 
The resilience of network $B$ is derived as $\alpha(B) = -0.3282$, while the resilience of the switched system under optimal switching is $\alpha^\star(Q) = -0.8831$, with the corresponding optimal ratio of $k^\star = 0.3226$. Furthermore, the sparsity pattern of $B$ derived using alternating minimization via the problem~$\amRec$ is shown on the right of Fig.~\ref{power_grid}. The corresponding resilience is $\alpha(B)=0$, while the resilience of the switched system under optimal switching is $\alpha^\star(Q) = -1.2457$, with the corresponding optimal ratio of $k^\star = 0.5472$. 

Thus, we conclude that the resilience of this example can be enhanced by switching between $A$ and $B$; and that the best approach is given by the alternating minimization method. 


\subsubsection{Resilient power network}\label{subsec:power-network-example}
\begin{figure*}[!t]
    \centering
    \includegraphics[width=\textwidth, clip, trim=11cm 10.5cm 9.4cm 10cm]{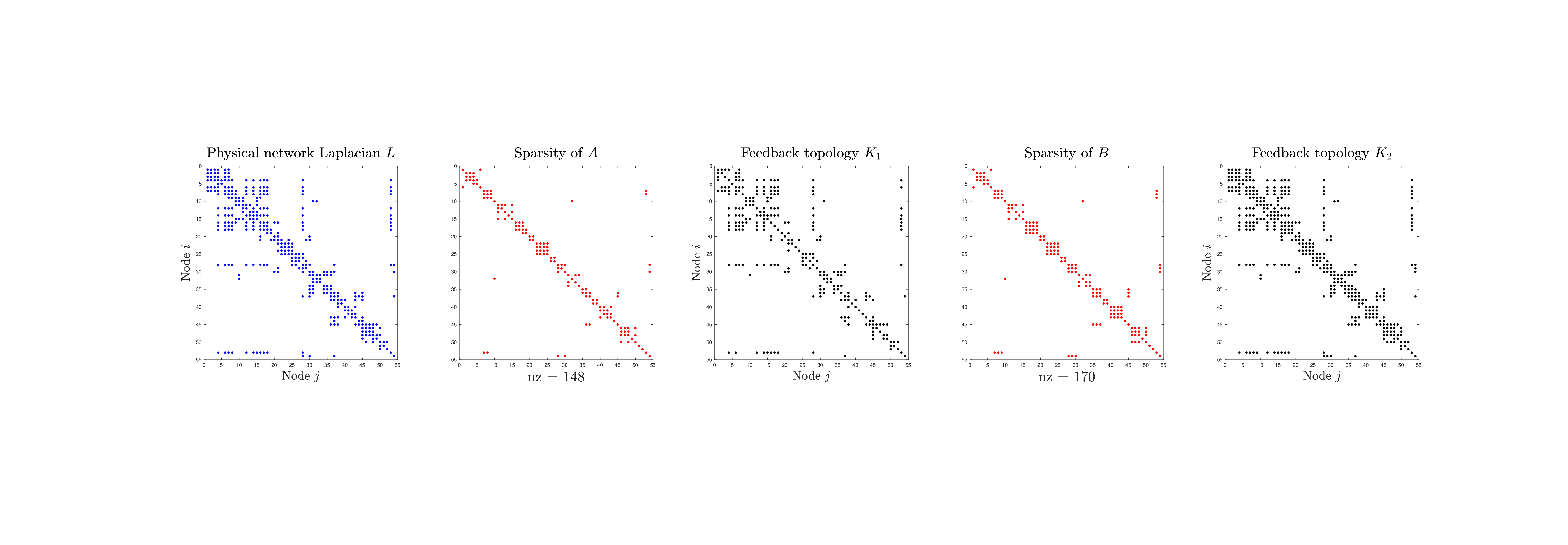}
    \caption{
Sparsity patterns and feedback topologies for the IEEE~118-bus system with $54$ generators. The first subplot (in blue) shows the physical network Laplacian~$L$. 
The second and fourth subplots (in red) depict the sparsity of the closed-loop matrices $A=-(L+K_1)$ and $B=-(L+K_2)$, respectively. The third and fifth subplots (in black) show the corresponding feedback topologies $K_1$ and $K_2$. 
}
\label{fig:bus118}
\end{figure*}
We consider the IEEE 118-bus system with $n=54$ generators. The power network data is obtained from the \textsc{MATPOWER} package~\cite{zimmerman2010matpower}. The physical coupling is described by the generator network Laplacian
$L\in\mathbb{R}^{54\times 54}$, whose given physical topology is shown in the first subplot (in blue) of Fig.~\ref{fig:bus118}. Without loss of generality, we set the damping matrix to the identity, $D=I_n$. We start from a designed feedback design $K_1$, whose topology is shown in the third subplot of Fig.~\ref{fig:bus118}. The corresponding closed-loop matrix is $A= -(L+K_1),$ which is both Hurwitz and sparse with $\alpha(A)=-0.5$. The sparsity pattern of $A$ is depicted in the second subplot of Fig.~\ref{fig:bus118}. We then apply the alternating minimization procedure to enhance resilience since it is more suitable for high-dimensional cases, as shown in the previous numerical example. The sparsity pattern of the resulting resilience-enhancing complementary network $B$ is shown in the fourth subplot of Fig.~\ref{fig:bus118}, with $\alpha(B) = -4.0657$. The corresponding feedback gain is $K_2 = -(L + B)$, and its topology is illustrated in the fifth subplot of Fig.~\ref{fig:bus118}. The resilience of the switched system (via different gain matrices $K_1, K_2$) under optimal switching is $\alpha^\star(Q) = -30.307$, with the corresponding optimal ratio of $k^\star = 0.0975$. Therefore, we can conclude that the resilience is greatly improved through the switching between the gains $K_1$ and $K_2$.


\section{Conclusions}
In this paper, we study how network resilience can be increased via time-varying actuation by switching between pairs of network structures. By prescribing periodic switches, we quantify resilience in terms of the corresponding averaged linear time-invariant counterpart. Then, we study design problems associated with the optimization of a) a switching schedule between two given topologies, and b) the design of a compatible topology and a switching schedule to improve the resilience of a given network. 
Future work includes the study of resilience for non-commutative networks and expanding this design approach to enhance resilience of nonlinear networked systems.

\bibliographystyle{IEEEtran}        
\bibliography{mybib}

\end{document}